\newtheorem{theorem}{Theorem}[section]
\newtheorem{prop}{Proposition}[section]
\newtheorem{lemma}{Lemma}[section]
\newtheorem{remark}{Remark}[section]
\newcommand{\ml}{\mathcal}
\newcommand{\mb}{\mathbb}
\DeclareMathOperator{\intt}{int}
\DeclareMathOperator{\extt}{ext}
\DeclareMathOperator{\bdd}{bdd}
\title{Large-time asymptotic behaviors for linear Blackstock's model of thermoviscous flow}
\author[1,2]{Wenhui Chen\thanks{Wenhui Chen (wenhui.chen.math@gmail.com)}}
\affil[1]{School of Mathematics and Information Science, Guangzhou University, 510006 Guangzhou, China}
\affil[2]{School of Mathematical Sciences, Shanghai Jiao Tong University, 200240 Shanghai, China}
\author[3]{Hiroshi Takeda\thanks{Hiroshi Takeda (h-takeda@fit.ac.jp)}}
\affil[3]{Department of Intelligent Mechanical Engineering, Faculty of Engineering, Fukuoka Institute of Technology,  811-0295 Fukuoka, Japan}
\date{}
\begin{document}

\maketitle
\begin{abstract}
	\medskip
In the classical theory of acoustic waves, Blackstock's model was proposed in 1963 to characterize the propagation of sound in thermoviscous fluids. In this paper, we investigate large-time asymptotic behaviors of the linear Cauchy problem for general Blackstock's model (that is, without Becker's assumption on monatomic perfect gases). We derive first- and second-order asymptotic profiles of solution as $t\gg1$ by applying refined WKB analysis and Fourier analysis. Our results not only improve optimal estimates in [Chen-Ikehata-Palmieri, \emph{Indiana Univ. Math. J.} (2023)] for lower dimensional cases, but also illustrate the optimal leading term and novel second-order profiles of solution with additional weighted $L^1$ data. \\
	
	\noindent\textbf{Keywords:} acoustic waves, third-order evolution equation, Cauchy problem, optimal estimates, asymptotic profiles, optimal leading term.\\
	
	\noindent\textbf{AMS Classification (2020)} 35B40, 35G10, 35C20.
\end{abstract}
\fontsize{12}{15}
\selectfont
%\tableofcontents
\section{Introduction}
In the last two decades, theoretical studies of acoustic waves are generally used in medical and industrial applications of high-intensity ultra sound, including lithotripsy, thermotherapy and sonochemistry. In order to characterize the propagation of sound in thermoviscous fluids, several mathematical models were proposed recently (e.g. \emph{Westervelt's equation}, \emph{Kuznetsov's equation} and \emph{Jordan-Moore-Gibson-Thompson equation}), which can be understood as higher-order wave equations with viscoelastic damping (see \cite{Hamilton-Blackstock-1998,Kaltenbacher-Thalhammer-2018} and references therein). Specially among them, in 1963, one of the fundamental models in acoustic waves that is the so-called \emph{Blackstock's model}, has been established in the pioneering work \cite{Blackstock-1963} by David T. Blackstock (Eu\'gene P. Schoch Professor Emeritus in The University of Texas at Austin). The detailed  elaboration of Blackstock's model with its applications has been given in the monograph \cite{Hamilton-Blackstock-1998}. In this paper, we will study some qualitative properties of solution to the linear Blackstock's model in $\mb{R}^n$ for large-time. Before introducing our main purposes, we will sketch out some historical background of Blackstock's model from the physic and mathematics points of view.
\subsection{Background of Blackstock's model}\label{Sub-1.1}
\textbf{Physical background of Blackstock's model:} Concerning sound propagation in thermoviscous fluids, the classical theory of acoustic waves considers the conservation of mass, the conservation momentum (i.e. the Navier-Stokes equations), the conservation energy associated with Fourier's law of heat conduction to the heat flux vector, and the state equation for perfect gases, which arises the well-known \emph{Navier-Stokes-Fourier} (N-S-F) equations. Under the irrotational flow, by applying Lighthill scheme of approximations (i.e. reserving the first- and second-order components only with small perturbations near the equilibrium state) to the N-S-F equations, the classical Blackstock's model (see \cite[Equations (4) or (6)]{Brunnhuber-Jordan-2016})
\begin{align}\label{Classical-B-01}
\left(\partial_t-\frac{\bar{\nu}}{\mathrm{Pr}}\Delta\right)\left(\bar{\psi}_{tt}-c_0^2\Delta\bar{\psi}-\bar{\delta}\Delta\bar{\psi}_t\right)-\frac{\bar{\nu}}{\mathrm{Pr}}(\bar{\delta}-b\gamma \bar{\nu})\Delta^2\bar{\psi}_t=\partial_t\left(\partial_t|\nabla\bar{\psi}|^2+(\gamma-1)\bar{\psi}_t\Delta\bar{\psi}\right),
\end{align}
has be established by David T. Blackstock \cite{Blackstock-1963} in 1963. Concerning more detailed deductions of the model \eqref{Classical-B-01} from the N-S-F equations, we refer interested readers to \cite[Section 1]{Brunnhuber-2015} or \cite[Appendix A]{Kaltenbacher-Thalhammer-2018}. In the above model, the scalar unknown $\bar{\psi}=\bar{\psi}(t,x)$ stands for the acoustic velocity potential due to the irrotational assumption. To explain the physical quantities \eqref{Classical-B-01} clearly, we collect them into Table \ref{Table_1} as follows:
\begin{table}[http]
	\centering	
	\caption{Notations for physical quantities and auxiliary abbreviations}
	\begin{tabular}{llll}
		\toprule
		Quantity & Notation   & Quantity & Notation   \\
		\midrule
		Bulk viscosity  & $\mu_{\mathrm{B}}>0$&Thermal diffusivity & $\bar{\kappa}>0$\\
		Shear viscosity  & $\mu_{\mathrm{V}}>0$&Prandtl number & $\mathrm{Pr}=\bar{\nu}/\bar{\kappa}$ \\
		 Speed of sound& $c_0>0$&Ratio of specific heats & $\gamma\in(1,5/3]$\\
	 Viscosity number & $b=4/3+\mu_{\mathrm{B}}/\mu_{\mathrm{V}}$&Diffusivity of sound & $\bar{\delta}=\bar{\nu}b+(\gamma-1)\bar{\nu}/\mathrm{Pr}$\\
	Kinematic viscosity& $\bar{\nu}>0$& & \\
		\bottomrule
	\end{tabular}
	\label{Table_1}
\end{table}

\noindent\textbf{Simplification of the model by rescaling:} For the sake of transparency, we can rescale the scalar equation \eqref{Classical-B-01} by a modified acoustic velocity potential $\psi(t,x):=\bar{\psi}(t,c_0x)$ so that the equation concisely turns into
\begin{align}\label{Non-linear_Blackstock_2}
	\underbrace{\left(\partial_t-\frac{\nu}{\mathrm{Pr}}\Delta\right)(\psi_{tt}-\Delta\psi-\delta\Delta\psi_t)-\frac{\nu}{\mathrm{Pr}}(\delta-b\gamma\nu)\Delta^2\psi_t}_{\mbox{linear Blackstock's model}}=\frac{1}{c_0^2}\partial_{t}\left(\partial_t|\nabla\psi|^2+(\gamma-1)\psi_t\Delta\psi\right),
\end{align}
where we took the modified physical quantities $(\nu,\delta,\kappa):=(\bar{\nu},\bar{\delta},\bar{\kappa})/c_0^2$.  In reality, the modified thermal diffusivity $\kappa=\bar{\kappa}/c_0^2$ is always a small number in perfect gases, for example, oxygen  with $\bar{\kappa}=23.2$ mm/s and $c_0=326$ m/s; xenon with $\bar{\kappa}=6.27$ mm/s and $c_0=178$ m/s; nitrogen with $\bar{\kappa}=19.6$ mm/s and $c_0=349$ m/s. Without loss of generality, we consequently assume $0<\kappa\ll \delta$ throughout this paper.
\medskip

\noindent\textbf{Becker's assumption:} Concerning the physical quantities in the linear part of Blackstock's model \eqref{Non-linear_Blackstock_2}, most of the recent works studied a particular situation for Blackstock's model under the so-called \emph{Becker's assumption} (see \cite{Becker-1922,Morduchow-Libby-1949,Hayes-1960} and references therein), which considers the fluid to be a monatomic perfect gas. In the mathematical language, Becker's assumption means the vanishing bulk viscosity $\mu_{\mathrm{B}}=0$ as well as
\begin{align}\label{Becker_Assum}
	b+\frac{\gamma-1}{\mathrm{Pr}}=b\gamma\ \ \mbox{if and only if}\ \ \delta-b\gamma\nu=0.
\end{align}
Due to the hypothesis that the last term $\Delta^2\psi_t$ of linear Blackstock's model disappears, Becker's assumption \eqref{Becker_Assum} allows us to implement a further factorization: the heat operator $\partial_t-\frac{\nu}{\mathrm{Pr}}\Delta$ and the viscoelastic damped wave operator $\partial_t^2-\Delta-\delta\Delta\partial_t$. It apparently diminishes some challenges to probe Blackstock's model. Simultaneously, the particular consideration of a monatomic perfect gas does not satisfy from physicists' viewpoints because its bulk viscosity coefficient is zero \cite{Pierce}, and the monatomic perfect gas situation is circumscribed in the reality, e.g. polyatomic gases are not explored. Hence, as mentioned in \cite{Kaltenbacher-Thalhammer-2018,Chen-Ikehata-Palmieri=2023}, the study for general Blackstock's model is vital and indispensable.
\medskip

\noindent\textbf{Mathematical background of Blackstock's model:}
Under the weakly nonlinear scheme \cite{Coulouvrat-1992}, the so-called \emph{substitution corollary} investigates the approximation $\Delta\psi\approx\psi_{tt}$ in the nonlinearity. Thus, it leads to the so-called \emph{Blackstock-Crighton equation} that is the linear Blackstock's model in \eqref{Non-linear_Blackstock_2} carrying the nonlinear terms (the one on the right)
\begin{align*}
	\frac{1}{c_0^2}\partial_{t}\left(\partial_t|\nabla\psi|^2+(\gamma-1)\psi_t\Delta\psi\right)\approx\frac{1}{c_0^2}\partial_t^2\left(|\nabla\psi|^2+\frac{\gamma-1}{2}|\psi_t|^2\right).
\end{align*}
 Concerning previous researches on the Blackstock-Crighton equation under Becker’s assumption, we refer the interested readers to \cite{Brunnhuber-Kaltenbacher-2014,Brunnhuber-2015,Brunnhuber-2016,Brunnhuber-Meyer-2016,Celik-Kyed-2019,Gambera-Lizama-Prokopczyk-2021} for Dirichlet or Neumann boundary value problems. Additionally, replacing Fourier's law of heat conduction by Cattaneo's law in the energy conservation of modeling, \cite{Liu-Qin-Zhang=2022} lately investigated global (in time) existence of small data Sobolev solution to Blackstock-Cattaneo model in the whole space, i.e. the thermoviscous flow with second sound phenomena. The pioneering work for Blackstock's model without Becker's assumption was done by \cite{Kaltenbacher-Thalhammer-2018} for the Dirichlet boundary problem, in particular, the vanishing thermal diffusivity limit (i.e. $\kappa\downarrow0$) in the weak sense was derived. Quite recently, the authors of \cite{Chen-Ikehata-Palmieri=2023} studied the Cauchy problem for Blackstock's model, where global (in time) existence, optimal estimates and asymptotic profiles of solutions are derived for higher-dimensional cases. To be specific, concerning linear Blackstock's model, the large-time asymptotic profile of first-order was obtained for $n\geqslant 3$, and singular limits with respect to the thermal diffusivity associated with first- and second-order profiles were completed. However, there are still some open questions for Blackstock's model in $\mb{R}^n$, for example, optimal estimates for lower spatial dimensions, and optimal higher-order asymptotic profiles (i.e. optimal leading terms). We will partly answer these questions in the present work.

\subsection{Main purpose of this paper}
Recalling from Subsection \ref{Sub-1.1} as well as in Table \ref{Table_1} that
\begin{align*}
	\delta=\nu\left(b+\frac{\gamma-1}{\mathrm{Pr}}\right)=b\nu +(\gamma-1)\kappa \ \ \mbox{and}\ \ \kappa=\frac{\nu}{\mathrm{Pr}},
\end{align*}
 the Cauchy problem for linear Blackstock's model in \eqref{Non-linear_Blackstock_2} can be written in a pellucid form  by
\begin{align}\label{Eq-Blackstock}
\begin{cases}
(\partial_t-\kappa\Delta)(\psi_{tt}-\Delta\psi-\delta\Delta\psi_t)+\kappa(\gamma-1)(b\nu-\kappa)\Delta^2\psi_t=0,&x\in\mb{R}^n,\ t>0,\\
\psi(0,x)=\psi_0(x),\ \psi_t(0,x)=\psi_1(x),\ \psi_{tt}(0,x)=\psi_2(x),&x\in\mb{R}^n,
\end{cases}
\end{align}
with $0<\kappa\ll \delta$, $\gamma\in(1,\frac{5}{3}]$, $b>0$ as well as $\nu>0$. We will not repeat these settings again in the
subsequent statements. As we talked about in preceding part of the text, the main property that we concern about is large-time asymptotic profiles. It should be stressed here that we do not assume Becker's assumption so that all derived results are valid in general.

Firstly, we will derive optimal estimates of the solution to \eqref{Eq-Blackstock} for all spatial dimensions, where the statement will be fixed in Theorem \ref{thm:2.1}. It improves the results in \cite[Theorems 2.1 and 2.3]{Chen-Ikehata-Palmieri=2023} in the lower dimensional cases by some effective representations formula and WKB analysis. More importantly, the core of this work is searching optimal leading term
\begin{align*}
	\ml{F}_{\xi\to x}^{-1}\left(\frac{1}{|\xi|^2}\left(\mathrm{e}^{-\kappa|\xi|^2t}-\cos(|\xi|t)\mathrm{e}^{-\frac{\delta}{2}|\xi|^2t}\right)\right)\int_{\mb{R}^n}\psi_2(x)\mathrm{d}x
\end{align*}
in Theorem \ref{thm:2.2}, in which the optimal upper bounds and lower bounds for $n\geqslant 1$ will be estimated under different hypotheses of initial datum. Indeed, the main difficulty that we need to overcome is the strong singularity as $|\xi|\to0$ for lower dimensions, for example, the next term with fixed $t_0>0$:
\begin{align*}
\left\|\ml{F}_{\xi\to x}^{-1}\left(\chi_{\intt}(\xi)\frac{1}{|\xi|^2}\mathrm{e}^{-\kappa|\xi|^2t_0}\right)\right\|_{L^2}^2=C\int_0^{\varepsilon_0}r^{n-5}\mathrm{e}^{-2\kappa r^2t_0}\mathrm{d}r\approx C\int_0^{\varepsilon_0}r^{n-5}\mathrm{d}r
\end{align*}
will bring some challengings when $n=1,\dots,4$. As a by-product, we will determine second-order asymptotic profile as higher-order diffusion-waves for $t\gg1$. Our proofs for all results will be given in Sections \ref{Section-3} and \ref{Section-4} step by step. Finally, some concluding remarks concerning about corresponding semilinear Cauchy problems in Section \ref{Section-5} complete the paper.

\subsection{Notations}\label{Sub-sec-notation}
Let us introduce some notations that will be used in this paper. We take the following zones of the Fourier space:
\begin{align*}
	\ml{Z}_{\intt}(\varepsilon_0):=\{|\xi|\leqslant\varepsilon_0\ll1\},\ \ 
	\ml{Z}_{\bdd}(\varepsilon_0,N_0):=\{\varepsilon_0\leqslant |\xi|\leqslant N_0\},\ \ 
	\ml{Z}_{\extt}(N_0):=\{|\xi|\geqslant N_0\gg1\}.
\end{align*}
Moreover, the cut-off functions $\chi_{\intt}(\xi),\chi_{\bdd}(\xi),\chi_{\extt}(\xi)\in \mathcal{C}^{\infty}$ having their supports in the corresponding zones $\ml{Z}_{\intt}(\varepsilon_0)$, $\ml{Z}_{\bdd}(\varepsilon_0/2,2N_0)$ and $\ml{Z}_{\extt}(N_0)$, respectively, such that
\begin{align*}
\chi_{\bdd}(\xi)=1-\chi_{\intt}(\xi)-\chi_{\extt}(\xi)\ \ \mbox{for all}\ \ \xi \in \mb{R}^n.
\end{align*}
The symbols of pseudo-differential operators $|D|$ and $\langle D\rangle $ are denoted by $|\xi|$ and $\langle \xi\rangle$, respectively, where $\langle \xi\rangle:=\sqrt{1+|\xi|^2}$ is  the Japanese bracket.

The symbol $f\lesssim g$ means: there exists a positive constant $C$ fulfilling $f\leqslant Cg$, which may be modified in different lines, analogously, for $f\gtrsim g$. Furthermore, the relation  $f\asymp g$ holds iff $f\lesssim g$ and $f\gtrsim g$ concurrently. We take the notation $\circ$ as the inner product in Euclidean space.

Let us recall the weighted $L^1$ space that is
\begin{align*}
	L^{1,1}:=\left\{f\in L^1 \ \big|\ \|f\|_{L^{1,1}}:=\int_{\mb{R}^n}(1+|x|)|f(x)|\mathrm{d}x<\infty \right\}
\end{align*}
so that $\|f\|_{L^1}\leqslant\|f\|_{L^{1,1}}$. The (weighted) mean of a summable function $f$ are denoted by
\begin{align*}
\mb{R}\ni P_f:=\int_{\mb{R}^n}f(x)\mathrm{d}x	\ \ \mbox{and}\ \ \mb{R}^n\ni M_f:=\int_{\mb{R}^n}xf(x)\mathrm{d}x.
\end{align*}
The Sobolev spaces of negative order $H^{-s}$ with $s>0$ are defined by $\ml{F}^{-1}(\langle \xi \rangle^{-s}\ml{F}(f))\in L^2$.

To end the introduction, we establish two time-dependent functions as follows:
\begin{align*}
\ml{D}_n^{(1)}(t):=\begin{cases}
t^{\frac{1}{2}}&\mbox{if}\ \ n=1,\\
(\ln t)^{\frac{1}{2}}&\mbox{if}\ \ n=2,\\
t^{\frac{1}{2}-\frac{n}{4}}&\mbox{if}\ \ n\geqslant 3,
\end{cases}
\ \ \mbox{and}\ \ \ml{D}_n^{(2)}(t):=
\begin{cases}
t^{2-\frac{n}{2}}&\mbox{if}\ \ n\leqslant 3,\\
(\ln t)^{\frac{1}{2}}&\mbox{if}\ \ n=4,\\
t^{1-\frac{n}{4}}&\mbox{if}\ \ n\geqslant5,
\end{cases}
\end{align*}
to be the growth or decay rates later.  It is worth noting that $n=2$ and $n=4$ are the critical dimensions to $\ml{D}_n^{(1)}(t)$ and $\ml{D}_n^{(2)}(t)$, respectively.

\section{Main results}
Before stating our main results in this work, let us introduce
\begin{align*}
\psi^{(1)}(t,x):=G_0(t,x)P_{\psi_2}
\end{align*}
as well as
\begin{align*}
\psi^{(2)}(t,x):=\nabla G_0(t,x)\circ M_{\psi_2}+H_0(t,x)P_{\psi_2}+G_1(t,x)\left(P_{\psi_1}+\frac{2\kappa-\delta}{2}P_{\psi_2}\right),
\end{align*}
carrying some functions (by inverse Fourier transform)
\begin{align*}
G_0(t,x)&:=\ml{F}_{\xi\to x}^{-1}\left(\frac{1}{|\xi|^2}\left(\mathrm{e}^{-\kappa|\xi|^2t}-\cos(|\xi|t)\mathrm{e}^{-\frac{\delta}{2}|\xi|^2t}\right)\right),\\
G_1(t,x)&:=\ml{F}^{-1}_{\xi\to x}\left(\frac{1}{|\xi|}\sin(|\xi|t)\mathrm{e}^{-\frac{\delta}{2}|\xi|^2t}\right),\\
H_0(t,x)&:=\frac{4\hat{\delta}-\kappa\delta^2}{8\kappa}t\,\ml{F}_{\xi\to x}^{-1}\left(|\xi|\sin(|\xi|t)\mathrm{e}^{-\frac{\delta}{2}|\xi|^2t}\right)
=-\frac{4\hat{\delta}-\kappa\delta^2}{8\kappa}t \Delta G_1(t,x),
\end{align*}
where the constant $\hat{\delta}:=-\kappa^2(\delta-\gamma b\nu)$
and again ``$\circ$'' represents the inner product in $\mathbb{R}^{n}$. Henceforth, the special case with Becker's assumption will be encompassed within all subsequent results for general circumstance.
%%%%%%%%%%%%%%%%%
%%%%%%%%%%%%%%%%%%
%%%%%%%%%%%%%%%%%%%%
%

We now formulate our first result, which states the first-order asymptotic profile of the solution to \eqref{Eq-Blackstock} and the sharpness of upper bound in the $L^{2}$ norm for large-time. Particularly, it implies infinite time $L^2$-blow-up of the solution if and only if $n=1,2,3$ with polynomial growth and $n=4$ with logarithmic growth.
\begin{theorem} \label{thm:2.1}
Let us consider the Cauchy problem \eqref{Eq-Blackstock} with initial datum $\psi_j\in H^{-2j}\cap L^{1}$ for any $n\geqslant 1$, where $j=0,1,2$. Then, the solution satisfies the following refined estimate for $t\gg1$: 
\begin{align}\label{eq:6}
	\|\psi(t,\cdot)-\psi^{(1)}(t,\cdot)\|_{L^2} = o\big( \ml{D}_n^{(2)}(t) \big).  
\end{align} 
Moreover, providing that $|P_{\psi_2}|$ does not vanish, then the solution satisfies the following optimal estimate for $t\gg1$:
\begin{align}\label{Opt-Est}
	\ml{D}_n^{(2)}(t)|P_{\psi_2}|\lesssim\|\psi(t,\cdot)\|_{L^2}\lesssim \ml{D}_{n}^{(2)}(t)\sum\limits_{j=0,1,2}\|\psi_j\|_{H^{-2j}\cap L^{1}}.
\end{align}
\end{theorem}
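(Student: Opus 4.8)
The plan is to pass to the Fourier side, where the third-order symbol factorizes, and to split the analysis into the three zones $\ml{Z}_{\intt}(\varepsilon_0)$, $\ml{Z}_{\bdd}(\varepsilon_0,N_0)$, $\ml{Z}_{\extt}(N_0)$ dictated by the cut-offs. First I would write the characteristic equation associated with $(\partial_t-\kappa\Delta)(\partial_t^2-\Delta-\delta\Delta\partial_t)+\kappa(\gamma-1)(b\nu-\kappa)\Delta^2\partial_t=0$ as a cubic $\lambda^3 + a_2(|\xi|)\lambda^2 + a_1(|\xi|)\lambda + a_0(|\xi|)=0$ and perform WKB/Puiseux expansions of its three roots: for small frequencies one expects one "parabolic" root $\lambda_0(\xi) = -\kappa|\xi|^2 + O(|\xi|^4)$ and a conjugate pair $\lambda_{\pm}(\xi) = \pm i|\xi| - \tfrac{\delta}{2}|\xi|^2 + O(|\xi|^3)$, which is exactly where the building blocks $\mathrm{e}^{-\kappa|\xi|^2t}$ and $\cos(|\xi|t)\mathrm{e}^{-\frac{\delta}{2}|\xi|^2t}$ in $G_0$ come from; for bounded and large frequencies the roots have real parts bounded away from zero, giving exponential decay $\mathrm{e}^{-ct}$ there. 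Representing $\hat{\psi}(t,\xi)$ as a linear combination $\sum_k c_k(\xi)\mathrm{e}^{\lambda_k(\xi)t}$ with coefficients determined by the Cauchy data $\hat\psi_0,\hat\psi_1,\hat\psi_2$ (solving the $3\times3$ Vandermonde-type system), one isolates the slowest-decaying piece and identifies it with $\hat\psi^{(1)}(t,\xi)=\widehat{G_0}(t,\xi)P_{\psi_2}$ after replacing $\hat\psi_2(\xi)$ by $\hat\psi_2(0)=P_{\psi_2}$.

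\textbf{Upper bound and the $o(\cdot)$ remainder.} For the remainder estimate \eqref{eq:6}, in the interior zone I would bound $\|\chi_{\intt}(\xi)(\hat\psi - \widehat{G_0}P_{\psi_2})\|_{L^2}$ by splitting it into (i) the error from approximating the true coefficients/exponents by the leading profile, which carries an extra power of $|\xi|$ and hence an extra $t^{-1/2}$ decay relative to $\ml{D}_n^{(2)}(t)$, and (ii) the error $\|\chi_{\intt}|\xi|^{-2}(\mathrm{e}^{-\kappa|\xi|^2t}-\cos\mathrm{e}^{-\delta|\xi|^2t/2})(\hat\psi_2(\xi)-\hat\psi_2(0))\|_{L^2}$, which by $|\hat\psi_2(\xi)-\hat\psi_2(0)|\lesssim |\xi|\,\|\psi_2\|_{L^{1,1}}$ — wait, here only $L^1$ is assumed, so instead one uses the standard device $|\hat\psi_2(\xi)-\hat\psi_2(0)| = o(1)$ as $|\xi|\to 0$ uniformly on a ball together with dominated convergence, which yields $o(\ml{D}_n^{(2)}(t))$ rather than a rate; a Hausdorff–Young/Hölder argument handles the near-$L^1$ input cleanly. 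The bounded and exterior zones contribute $\mathrm{e}^{-ct}\sum_j\|\psi_j\|_{H^{-2j}}$ which is negligible (one needs $H^{-2j}$ precisely so that the $|\xi|^{-2j}$-type singularities of the Vandermonde coefficients are absorbed near where zones meet — actually the negative Sobolev norms are there to control the low-frequency singularity, so care is needed in how the splitting interacts with $\chi_{\intt}$). Combining gives \eqref{eq:6}, and since $\|\psi^{(1)}(t,\cdot)\|_{L^2}=\|G_0(t,\cdot)\|_{L^2}|P_{\psi_2}|\asymp \ml{D}_n^{(2)}(t)|P_{\psi_2}|$ (a direct computation of $\int_0^{\varepsilon_0} r^{n-5}|\mathrm{e}^{-\kappa r^2 t}-\cos(rt)\mathrm{e}^{-\delta r^2 t/2}|^2\,dr$, where the numerator vanishes to order $r^2$ as $r\to0$ so the integrand behaves like $r^{n-1}$ near zero and the large-$r$ part gives the stated rates after rescaling $r\mapsto r/\sqrt t$), the triangle inequality yields the upper bound in \eqref{Opt-Est}: $\|\psi(t,\cdot)\|_{L^2}\le \|\psi^{(1)}\|_{L^2}+o(\ml{D}_n^{(2)}(t)) \lesssim \ml{D}_n^{(2)}(t)\sum_j\|\psi_j\|_{H^{-2j}\cap L^1}$.

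\textbf{Lower bound.} For the lower bound I would use the reverse triangle inequality $\|\psi(t,\cdot)\|_{L^2}\ge \|\psi^{(1)}(t,\cdot)\|_{L^2} - \|\psi(t,\cdot)-\psi^{(1)}(t,\cdot)\|_{L^2} \ge \ml{D}_n^{(2)}(t)|P_{\psi_2}| - o(\ml{D}_n^{(2)}(t))$, provided $\|G_0(t,\cdot)\|_{L^2}\gtrsim \ml{D}_n^{(2)}(t)$ is genuinely a two-sided bound; this is where one must verify there is no cancellation in $\mathrm{e}^{-\kappa r^2 t}-\cos(rt)\mathrm{e}^{-\delta r^2 t/2}$ over a set of $r$ of the right measure — e.g. on the range $r\asymp t^{-1/2}$ the two terms are $O(1)$ but the cosine oscillates, so one restricts to an annulus where $\cos(rt)$ is bounded away from $1$ (or uses Plancherel together with the explicit low-frequency integral). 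The sharpness statement (infinite-time $L^2$-blow-up iff $n\le 3$, log-growth iff $n=4$) is then read off from the explicit form of $\ml{D}_n^{(2)}(t)$: $t^{2-n/2}\to\infty$ for $n\le 3$, $(\ln t)^{1/2}\to\infty$ for $n=4$, and $t^{1-n/4}\to 0$ for $n\ge 5$.

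\textbf{Main obstacle.} The hardest point is the interior-zone analysis for low dimensions $n=1,\dots,4$, precisely the strong singularity $|\xi|^{-2}$ flagged in the introduction: in $L^2$ this gives $\int_0^{\varepsilon_0} r^{n-5}\,dr$ which diverges for $n\le 4$, so the decay/growth rate $\ml{D}_n^{(2)}(t)$ comes entirely from the time factor compensating this divergence, and every approximation step (root expansion, coefficient expansion, replacing $\hat\psi_2(\xi)$ by $P_{\psi_2}$) must produce an extra factor of $|\xi|$ — equivalently $t^{-1/2}$ — to stay under $\ml{D}_n^{(2)}(t)$; showing that the error is genuinely $o(\ml{D}_n^{(2)}(t))$ and not merely $O(\ml{D}_n^{(2)}(t))$, under only $L^1$ (not weighted $L^1$) data, is the delicate part and is where a dominated-convergence argument on $\hat\psi_2(\xi)-\hat\psi_2(0)$ replaces a naive Taylor estimate.
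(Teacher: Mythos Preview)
Your overall strategy coincides with the paper's: Fourier-side zone decomposition, Puiseux expansion of the three roots with $\lambda_0\sim -\kappa|\xi|^2$ and $\lambda_\pm\sim \pm i|\xi|-\tfrac{\delta}{2}|\xi|^2$ for small $|\xi|$, exponential decay on $\ml{Z}_{\bdd}\cup\ml{Z}_{\extt}$, identification of $\widehat{G}_0(t,|\xi|)P_{\psi_2}$ as the dominant piece, and the (reverse) triangle inequality for both bounds in \eqref{Opt-Est}. One factual correction: the hypothesis $\psi_j\in H^{-2j}$ is used for \emph{large} frequencies, not small ones --- on $\ml{Z}_{\extt}$ the Vandermonde coefficients attached to $\widehat{\psi}_1,\widehat{\psi}_2$ carry factors $\langle\xi\rangle^{-2},\langle\xi\rangle^{-4}$, and one needs $\langle\xi\rangle^{-2j}\widehat{\psi}_j\in L^2$ to close the estimate $\|(1-\chi_{\intt})\widehat{\psi}\|_{L^2}\lesssim\mathrm{e}^{-ct}\sum_j\|\psi_j\|_{H^{-2j}}$; on $\ml{Z}_{\intt}$ only the $L^1$ assumption enters. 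For the sharp two-sided bound $\|\chi_{\intt}\widehat{G}_0(t,|\xi|)\|_{L^2}\asymp\ml{D}_n^{(2)}(t)$ and the pointwise control of the third kernel, the paper's key device --- which your sketch does not mention --- is the rewriting
\[
\mathrm{e}^{\lambda_1 t}-\cos(\lambda_{\mathrm{I}}t)\mathrm{e}^{\lambda_{\mathrm{R}}t}=2\mathrm{e}^{\lambda_1 t}\sin^2\!\big(\tfrac{\lambda_{\mathrm{I}}}{2}t\big)+\cos(\lambda_{\mathrm{I}}t)\big(\mathrm{e}^{\lambda_1 t}-\mathrm{e}^{\lambda_{\mathrm{R}}t}\big),
\]
which exposes the $|\xi|^{-2}\sin^2(|\xi|t/2)$ structure and reduces everything to the known optimal estimate $\|\chi_{\intt}|\xi|^{-2}|\sin(|\xi|t)|^2\mathrm{e}^{-c|\xi|^2t}\|_{L^2}\asymp\ml{D}_n^{(2)}(t)$; your direct rescaling $r\mapsto r/\sqrt{t}$ would also work but is more laborious in the low dimensions. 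Finally, for the $o(\ml{D}_n^{(2)}(t))$ contribution from $\widehat{\psi}_2(\xi)-P_{\psi_2}$ under only $L^1$ data, your Fourier-side continuity/dominated-convergence argument is a legitimate alternative to what the paper does: it instead splits the convolution $G_0(t,\cdot)\ast\psi_2-G_0(t,\cdot)P_{\psi_2}$ in physical space at $|y|=t^{\alpha}$, using $\int_{|y|\geqslant t^{\alpha}}|\psi_2|\to 0$ on the far part and $t^{\alpha}\|\,|\xi|\widehat{G}_0\|_{L^2}\lesssim t^{\alpha}\ml{D}_n^{(1)}(t)=o(\ml{D}_n^{(2)}(t))$ (for small $\alpha$) on the near part.
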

%%%%%%%%%%%%%%%%%%%%%%%%%%
%%%%%%%%%%%%%%%%%%%%%%%%%%%%
\begin{remark}\label{Rem-prof}
%Obviously, the functions $G_1(t,x)$ and $H_0(t,x)$ are related to diffusion-waves or the fundamental solution to the viscoelastic damped waves
%\begin{align}\label{diff-waves}
	%u_{tt}-\Delta u-\delta\Delta u_t=0, \ \ x\in\mb{R}^n,\ t>0,
%\end{align}
%since the Fourier transforms
%\begin{align*}
%	\ml{F}_{x\to\xi}\big(G_1(t,x)\big)&=\frac{\sin(|\xi|t)}{|\xi|}\mathrm{e}^{-\frac{\delta}{2}|\xi|^2t},\\
%	\ml{F}_{x\to\xi}\big(H_0(t,x)\big)&=\frac{4\hat{\delta}-\kappa\delta^2}{8\kappa}t|\xi|^2\frac{\sin(|\xi|t)}{|\xi|}\mathrm{e}^{-\frac{\delta}{2}|\xi|^2t}.
%\end{align*}
%Concerning the first profile $\ml{G}_0(t,x)$, we may understand it as a linear combination of heat equation
%\begin{align*}
%	v_{t}-\kappa\Delta v=0, \ \ x\in\mb{R}^n,\ t>0,
%\end{align*}
%and the diffusion-waves \eqref{diff-waves}. \textcolor{blue}{TO TAKEDA-SAN: DO YOU HAVE OTHER EXPLANATIONS?}
%
The refined estimate \eqref{eq:6} means that the solution $\psi(t,\cdot)$ behaves like $\psi^{(1)}(t,\cdot)$ as $t \to \infty$ in the $L^2$ framework, which gives the reason why we call $\psi^{(1)}(t,x)$ as a first-order profile.
Here, the function $\psi^{(1)}(t,x)$ can be regarded as a linear combination of the diffusion-waves 
\begin{align*}
\ml{F}^{-1}_{\xi\to x}\left(
\cos(|\xi|t)
\mathrm{e}^{-\frac{\delta}{2}|\xi|^2t}
\right)
\end{align*}
and the heat kernel %
\begin{align*}
	\ml{F}^{-1}_{\xi\to x}\left(
	\mathrm{e}^{-\kappa |\xi|^2t}
	\right)
	=  
	\frac{1}{(4 \pi\kappa t)^{n/2}}
	\mathrm{e}^{-\frac{|x|^{2}}{4 \kappa t}}
\end{align*}
with the singularity $|\xi|^{-2}$ near $|\xi| =0$.
This fact indicates that the desired estimates in the low dimensional cases, especially $n=1,2$, are delicate, when we deal with them in the $L^{2}$ framework.   
\end{remark}
\begin{remark}
By taking the same assumption as those in \cite[Theorems 2.1 and 2.3]{Chen-Ikehata-Palmieri=2023}, the derived estimate \eqref{Opt-Est} fills the crucial gap (for upper bound and lower bound also) in lower dimensions $n=1,2$. The difficulty of not summable singularity in $|\xi|=0$ stated in \cite[Remark 2.6]{Chen-Ikehata-Palmieri=2023} is solved. In other words, a complete optimal estimate of $\psi(t,\cdot)$ in the $L^2$ norm such that
\begin{align*}
\|\psi(t,\cdot)\|_{L^2}\asymp \ml{D}_n^{(2)}(t)\ \ \mbox{for}\ \  t\gg1 ,
\end{align*}
 is derived for all dimensions with both $L^{1}$ and weighted $L^1$ datum.
\end{remark}
%%%%%%%%%%%%%%%%%%%%%%%%%%%%%%%%%
%%%%%%%%%%%%%%%%%%%%%%%%%%%%%%%%%%%%%%
%%%%%%%%%%%%%%%%%%%%%%%%%%%%%%%%%%%%%%
In the forthcoming discussion, we will claim the second-order approximation of the solution as $t \to \infty$, which enables us to discuss the optimality of the first-order asymptotic profile (i.e. the leading term). Particularly, it implies infinite time $L^2$-blow-up of the error term (by subtracting the first-order profile) if and only if $n=1,2$.
For the sake of the simplicity, we take the notations 
\begin{align*}
P_{\Psi_{1,2}} := P_{\psi_1}+\frac{2\kappa-\delta}{2}P_{\psi_2} \ \ \mbox{and} \ \ P_{2} := \frac{4\hat{\delta}-\kappa\delta^2}{8\kappa}P_{\psi_2}
\end{align*}
for suitable combined datum.
\begin{theorem} \label{thm:2.2}
Under the same assumption on initial datum as those in Theorem \ref{thm:2.1}, let us suppose that $\psi_{2} \in L^{1,1}$ additionally.
Then, the next refined estimates hold for $t\gg1$:
\begin{align}\label{Upper-second}
	\|\psi(t,\cdot)-\psi^{(1)}(t,\cdot)-\psi^{(2)}(t,\cdot)\|_{L^2}= o\big( \ml{D}_n^{(1)}(t) \big)
\end{align}
as well as
\begin{align}\label{Opt-Prof}
	\|\psi(t,\cdot)-\psi^{(1)}(t,\cdot)\|_{L^2}\lesssim\ml{D}_n^{(1)}(t) \left( \sum\limits_{j=0,1}\|\psi_j\|_{H^{-2j} \cap L^{1}} + \|\psi_2 \|_{H^{-4} \cap L^{1,1}} \right).
\end{align}
Moreover, the lower bound estimates can be shown in different dimensions.
\begin{description}
	\item[(I) Lower dimensional cases: $n=1,2$.] Providing that $|M_{\psi_2}|$ and $|P_{\Psi_{1,2}}|$ do not vanish simultaneously, then the following sharp lower bound estimate holds for $t\gg1$:  
	\begin{align}\label{Opt-Prof1}
		\left(|P_{\Psi_{1,2}}| + |M_{\psi_2}|\right) \ml{D}_n^{(1)}(t) \lesssim\|\psi(t,\cdot)-\psi^{(1)}(t,\cdot)\|_{L^2}.
	\end{align}
\item[(II)  Higher dimensional cases: $n\geqslant 3$.] Providing that $|M_{\psi_2}|$ and $|2\delta P_{\Psi_{1,2}}+nP_2|$ do not vanish simultaneously, then the following sharp lower bound estimate holds for $t\gg1$:
\begin{align}\label{eq:12}
	\left(|2\delta P_{\Psi_{1,2}}+nP_2|+ |M_{\psi_2}| \right) \ml{D}_n^{(1)}(t) \lesssim\|\psi(t,\cdot)-\psi^{(1)}(t,\cdot)\|_{L^2}.
\end{align}

\end{description}

\end{theorem}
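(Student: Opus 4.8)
The plan is to work entirely on the Fourier side, splitting the phase space into the three zones $\ml{Z}_{\intt}(\varepsilon_0)$, $\ml{Z}_{\bdd}(\varepsilon_0/2,2N_0)$, $\ml{Z}_{\extt}(N_0)$, and then transplanting back to $L^2$ via Plancherel. I expect all the substance of the theorem to come from the low-frequency zone; in the bounded and high-frequency zones the solution, the first-order profile $\psi^{(1)}$, and the second-order profile $\psi^{(2)}$ are each exponentially decaying in $t$ (this should follow from the same WKB/representation-formula analysis already used for Theorem \ref{thm:2.1}, since the characteristic roots have strictly negative real part there), so the differences in \eqref{Upper-second}, \eqref{Opt-Prof}, \eqref{Opt-Prof1}, \eqref{eq:12} all contribute exponentially small terms from $\ml{Z}_{\bdd}\cup\ml{Z}_{\extt}$, negligible against the polynomial/logarithmic rates $\ml{D}_n^{(1)}(t)$. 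So from here on I focus on $|\xi|\leq\varepsilon_0$.

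First I would record the asymptotic expansion of the three characteristic roots $\lambda_j(|\xi|)$ of the symbol of \eqref{Eq-Blackstock} as $|\xi|\to 0$: one real root $\lambda_1 = -\kappa|\xi|^2 + O(|\xi|^4)$ coming from the heat part, and a conjugate pair $\lambda_{2,3} = \pm i|\xi| - \tfrac{\delta}{2}|\xi|^2 + O(|\xi|^3)$ coming from the viscoelastic damped-wave part — this is exactly the structure reflected in $G_0, G_1, H_0$. Expanding the Fourier multiplier $\hat\psi(t,\xi)$ of the solution in terms of these roots and the initial data $\hat\psi_j(\xi)$, I separate out the exact leading multiplier, which is $|\xi|^{-2}(\mathrm{e}^{\lambda_1 t} - \cos(|\xi|t)\mathrm{e}^{-\frac{\delta}{2}|\xi|^2 t})\hat\psi_2(\xi)$ plus the analogous $G_1$-type term; then I peel off $\widehat{\psi^{(1)}}$ by replacing $\hat\psi_2(\xi)$ with its value $P_{\psi_2}$ at $\xi=0$ and replacing $\lambda_1$ by $-\kappa|\xi|^2$, and I peel off $\widehat{\psi^{(2)}}$ by taking the next terms in the Taylor expansion: the $\nabla G_0\circ M_{\psi_2}$ term is the first-order Taylor correction $\hat\psi_2(\xi)\approx P_{\psi_2} + i\xi\circ M_{\psi_2}$ of the data, the $H_0 P_{\psi_2}$ term is the $O(|\xi|^4)$ correction $\lambda_1 = -\kappa|\xi|^2 + \tfrac{\hat\delta}{\kappa}|\xi|^4 + \cdots$ to the heat root (this is where the constant $\tfrac{4\hat\delta-\kappa\delta^2}{8\kappa}$ and the factor $-t\Delta G_1$ come from, combining the $|\xi|^4$ heat correction with the $|\xi|^4$ and $|\xi|^3$ corrections to the damped-wave pair), and the $G_1 P_{\Psi_{1,2}}$ term collects the $\psi_1$ and $\psi_2$ contributions through the $\sin$-branch. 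After this bookkeeping, the remainder multiplier $\hat\psi - \widehat{\psi^{(1)}} - \widehat{\psi^{(2)}}$ on $\ml{Z}_{\intt}$ is a sum of terms each of which is either $O(|\xi|^{-2})$ times a genuine $o(1)$ data factor (using $L^{1,1}$ regularity of $\psi_2$ to get the modulus of continuity $|\hat\psi_2(\xi) - P_{\psi_2} - i\xi\circ M_{\psi_2}| = o(|\xi|)$, and $L^1$ for the others), or $O(|\xi|)$ times a bounded factor with an explicit $\mathrm{e}^{-c|\xi|^2 t}$ or oscillatory-plus-damping weight; computing $\int_0^{\varepsilon_0} r^{n-1}\cdot(\text{weight})^2\,dr$ for each and matching against the critical exponent gives the $o(\ml{D}_n^{(1)}(t))$ bound \eqref{Upper-second}, and the same integrals with the $o(1)$ replaced by $O(\text{data norm})$ give \eqref{Opt-Prof}.

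For the lower bounds I would isolate, inside $\widehat{\psi^{(2)}}$ restricted to $\ml{Z}_{\intt}$, the dominant building block as $t\to\infty$ and compute its $L^2$ norm from below directly, then show the remaining pieces of $\psi^{(2)}$ (and of the remainder) are of strictly smaller order so a triangle inequality gives a matching lower bound for $\|\psi - \psi^{(1)}\|_{L^2} \geq \|\psi^{(2)}\|_{L^2} - \|\psi - \psi^{(1)} - \psi^{(2)}\|_{L^2}$. The subtlety is that which term dominates depends on $n$: the $\nabla G_0\circ M_{\psi_2}$ term behaves like $|M_{\psi_2}|\,\ml{D}_n^{(1)}(t)$ in every dimension (its multiplier is $\sim i\xi|\xi|^{-2}(\mathrm{e}^{-\kappa|\xi|^2 t} - \cos\mathrm{e}^{-\delta|\xi|^2 t/2})$, whose $L^2$ norm squared is $\int_0^{\varepsilon_0} r^{n-3}(\cdots)^2 dr$, matching $\ml{D}_n^{(1)}$), whereas among $H_0 P_{\psi_2}$ and $G_1 P_{\Psi_{1,2}}$ the relevant combination is $\delta$-weighted: for $n=1,2$ the $G_1$-term dominates $H_0$ and one gets the clean coefficient $|P_{\Psi_{1,2}}|$ of \eqref{Opt-Prof1}, while for $n\geq 3$ the $H_0$-term (of order $t\cdot t^{-n/4}$-type, i.e. same $\ml{D}_n^{(1)}$ after the $t\Delta$) is comparable to the $G_1$-term and the two combine into the single coefficient $|2\delta P_{\Psi_{1,2}} + nP_2|$ in \eqref{eq:12} — the factor $n$ appearing because $\Delta$ on the radial Gaussian-type profile in $\mathbb{R}^n$ produces the dimensional constant. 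The main obstacle, and the place I would spend the most care, is precisely this dimension-dependent coefficient identification: one must compute the exact leading constant in $\|\ml{F}^{-1}(|\xi|^{-2}(\mathrm{e}^{-\kappa|\xi|^2 t} - \cos(|\xi|t)\mathrm{e}^{-\frac{\delta}{2}|\xi|^2 t}))\|_{L^2}$ and the analogous $G_1$, $H_0$ norms to enough precision to see that the cross terms between the $M_{\psi_2}$-piece (odd multiplier) and the $P_{\Psi_{1,2}}$/$P_2$-pieces (even multipliers) vanish by parity, so that the two contributions add in $L^2$ without cancellation — which is what lets $|M_{\psi_2}|$ and the $P$-coefficient appear as a sum rather than only one of them. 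Handling the critical dimensions $n=2$ (logarithm in $\ml{D}_n^{(1)}$) and $n=4$ (logarithm in $\ml{D}_n^{(2)}$, relevant when checking that $\psi^{(2)}$ is truly lower-order than $\psi^{(1)}$) requires tracking the $\int_0^{\varepsilon_0} r^{-1}\mathrm{e}^{-cr^2 t}dr \sim \tfrac12\ln t$ asymptotics carefully rather than quoting a power law.
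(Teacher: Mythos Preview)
Your proposal is correct and follows essentially the same route as the paper: restrict to small frequencies, expand the three characteristic roots to order $|\xi|^4$, peel off $\psi^{(1)}$ and then $\psi^{(2)}$ as successive Taylor corrections (in the roots and in the data $\widehat{\psi}_2(\xi)\approx P_{\psi_2}+i\xi\circ M_{\psi_2}$), and obtain the lower bounds from $\|\psi-\psi^{(1)}\|_{L^2}\geqslant\|\psi^{(2)}\|_{L^2}-o(\ml{D}_n^{(1)}(t))$ together with the orthogonality you identify between the purely imaginary $i(\xi\circ M_{\psi_2})\widehat{G}_0$ piece and the real $(P_{\Psi_{1,2}}+P_2t|\xi|^2)\widehat{G}_1$ piece.

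Two small points of comparison. First, for the $o(\ml{D}_n^{(1)}(t))$ in \eqref{Upper-second} the paper does not use your Fourier-side modulus-of-continuity $|\widehat{\psi}_2(\xi)-P_{\psi_2}-i\xi\circ M_{\psi_2}|=o(|\xi|)$; instead it works in physical space, splitting the convolution $G_0(t,\cdot)\ast\psi_2$ at $|y|=t^\alpha$, using a second-order Taylor remainder on $|y|\leqslant t^\alpha$ and the tail $\int_{|y|\geqslant t^\alpha}|y||\psi_2(y)|\,\mathrm{d}y\to 0$ on the complement---your argument is the Fourier dual of this and works equally well. Second, for the $n\geqslant 3$ coefficient the precise mechanism (which you leave implicit) is Riemann--Lebesgue: since $\mathrm{e}^{-\delta r^2}r^{n-3}\in L^1(0,\infty)$ one has $\int_0^\infty \mathrm{e}^{-\delta r^2}\sin^2(\sqrt{t}r)r^{n-3+2k}\,\mathrm{d}r\to\tfrac14\delta^{-\frac{n}{2}+1-k}\Gamma(\tfrac{n}{2}-1+k)$ for $k=0,1,2$, and the Gamma recursion $\Gamma(z+1)=z\Gamma(z)$ is exactly what produces the factor $n$ in $2\delta P_{\Psi_{1,2}}+nP_2$. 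One minor imprecision: the constant $\frac{4\hat\delta-\kappa\delta^2}{8\kappa}$ in $H_0$ is the $|\xi|^3$ coefficient of $\lambda_{\mathrm{I}}=\mathrm{Im}\,\lambda_{2,3}$ (it enters through $\cos(\lambda_{\mathrm{I}}t)-\cos(|\xi|t)$), not the $|\xi|^4$ correction to the heat root $\lambda_1$ as you suggest; the latter contributes only at the next order.
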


%%%%%%%%%%%%%%%%%%%%%%%%%%%%%%%
%%%%%%%%%%%%%%%%%%%%%%%%%%%%%%%

\begin{remark}
By subtracting the second-order asymptotic profile $\psi^{(2)}(t,\cdot)$ in the $L^2$ norm, we arrive at a faster decay estimate \eqref{Upper-second} in the comparison with \eqref{Opt-Prof}, which somehow implies generalized diffusion phenomena of second-order. As we mentioned in Remark \ref{Rem-prof}, the second-order profile may be understood by higher-order diffusion-waves.
\end{remark}
\begin{remark}
Due to the limit 
\begin{align*}
\lim\limits_{t\to\infty}\frac{\ml{D}_n^{(2)}(t)}{\ml{D}_n^{(1)}(t)}=\infty\ \ \mbox{for any}\ \ n\geqslant 1,
\end{align*}
we ensure some improvements as follows: $t^{-1}$ when $n=1$; $(\ln t)^{\frac{1}{2}}t^{-1}$ when $n=2$; $t^{-\frac{3}{4}}$ when $n=3$; $(\ln t)^{-\frac{1}{2}}t^{-\frac{1}{2}}$ when $n=4$; $t^{-\frac{1}{2}}$ when $n\geqslant 5$; by subtracting the first-order profile $\psi^{(1)}(t,\cdot)$ in the $L^2$ norm for $t\gg1$. Among them, the one-dimensional case promises the greatest gain in decay rates.
\end{remark}
\begin{remark}
The last result offers an alternative to obtain the sharp lower bound estimates of the first-order approximation $\|\psi(t,\cdot)-\psi^{(1)}(t,\cdot)\|_{L^2}$, 
even if $|P_{\psi_0}|=|P_{\psi_1}|=|P_{\psi_2}|= 0$.
It is also worth pointing out that the situations are different between $n=1,2$ and $n\geqslant 3$.
This phenomenon originates from the lower bound estimates of $\psi^{(2)}(t,\cdot)$ in the $L^2$ norm, that is, 
the $L^{2}$ integrability of $|\xi|^{-1}$ near $|\xi|=0$ in $\mathbb{R}^{n}$.
Roughly speaking, when $n=1,2$, we have to use the oscillation of $\sin(|\xi| t)$ in $\widehat{\psi}^{(2)}(t,\xi)$ to recover the $L^{2}$ integrability of $|\xi|^{-1}$ and to derive sharp decay properties, 
while $|\xi|^{-1}$ is $L^{2}$ integrable near $|\xi|=0$ for any $n\geqslant 3$.
\end{remark}
\begin{remark}
According to the derived estimates \eqref{Opt-Prof}-\eqref{eq:12}, we have obtained the optimal leading term $\psi^{(1)}(t,x)$ for any $n\geqslant 1$ already in the sense that\begin{align*}
\|\psi(t,\cdot)-\psi^{(1)}(t,\cdot)\|_{L^2}\asymp \ml{D}_n^{(1)}(t) \ \ \mbox{for}\ \ t\gg1,
\end{align*}
which is one of the innovation of this paper. Different from \cite[Theorem 2.2]{Chen-Ikehata-Palmieri=2023}, our optimal leading term excludes the superfluous terms with
\begin{align*}
	\ml{F}^{-1}_{\xi\to x}\left(\mathrm{e}^{-\kappa|\xi|^2t}\right)\ \ \mbox{and}\ \ \ml{F}^{-1}_{\xi\to x}\left(\frac{1}{|\xi|}\sin(|\xi|t)\mathrm{e}^{-\frac{\delta}{2}|\xi|^2t}\right),
\end{align*}
due to some applications of refined WKB analysis.
\end{remark}

\begin{remark}
In the simplest case with Becker's assumption, we may derive higher-order asymptotic profiles of solution by an easy way: firstly, we may set up with the aid of $\varphi^{\mathrm{h}}:=\psi_t-\kappa\Delta\psi$ so that the model will be reduced to
\begin{align*}
	\varphi^{\mathrm{h}}_{tt}-\Delta\varphi^{\mathrm{h}}-\delta\Delta\varphi^{\mathrm{h}}_t=0,\ \ x\in\mb{R}^n,\ t>0,
\end{align*}
with suitable datum $\varphi^{\mathrm{h}}(0,x)=\psi_1(x)-\kappa\Delta\psi_0(x)$ as well as $\varphi^{\mathrm{h}}_t(0,x)=\psi_2(x)-\kappa\Delta\psi_1(x)$; secondly, from some known results (for example, Theorems 3.1, 3.2 and 3.3 in \cite{Michihisa=2018} or \cite{Michihisa=2021}), we may describe asymptotic behaviors for $\varphi^{\mathrm{h}}=\varphi^{\mathrm{h}}(t,x)$; eventually, from the backward transform
\begin{align*}
	\psi(t,x)=\mathrm{e}^{\kappa\Delta t}\psi_0(x)+\int_0^t\mathrm{e}^{\kappa \Delta (t-s)}\varphi^{\mathrm{h}}(s,x)\mathrm{d}s
\end{align*}
to achieve our aim.
\end{remark}

\section{Asymptotic behaviors of solution in the Fourier space}\label{Section-3}
As preparations for deriving large-time asymptotic profiles for the linear Cauchy problem \eqref{Eq-Blackstock}, this section mainly concentrates on refined behaviors of solution to Blackstock's model in the Fourier space, namely,
\begin{align}\label{Eq-Blackstock-Fourier}
\begin{cases}
\widehat{\psi}_{ttt}+(\delta+\kappa)|\xi|^2\widehat{\psi}_{tt}+(1+\tilde{\gamma}|\xi|^2)|\xi|^2\widehat{\psi}_t+\kappa|\xi|^4\widehat{\psi}=0,&\xi\in\mb{R}^n,\ t>0,\\
\widehat{\psi}(0,\xi)=\widehat{\psi}_0(\xi),\ \widehat{\psi}_t(0,\xi)=\widehat{\psi}_1(\xi),\ \widehat{\psi}_{tt}(0,\xi)=\widehat{\psi}_2(\xi),&\xi\in\mb{R}^n,
\end{cases}
\end{align}
where the partial Fourier transform with respect to spatial variables was employed, and we took $\tilde{\gamma}:=\gamma b\nu\kappa$ for the simplicity of notations. The corresponding characteristic equation to the last model \eqref{Eq-Blackstock-Fourier}, therefore, is given by
\begin{align}\label{Cubic}
	\lambda^3+(\delta+\kappa)|\xi|^2\lambda^2+(1+\tilde{\gamma}|\xi|^2)|\xi|^2\lambda+\kappa|\xi|^4=0.
\end{align}

Recalling the derived pointwise estimates in the restricted zones, the authors of \cite{Chen-Ikehata-Palmieri=2023} proved
\begin{align}\label{Estimate-large-middle}
\big(1-\chi_{\intt}(\xi)\big)|\widehat{\psi}(t,\xi)|\lesssim\big(1-\chi_{\intt}(\xi)\big)\mathrm{e}^{-ct}\left(|\widehat{\psi}_0(\xi)|+\langle\xi\rangle^{-2}|\widehat{\psi}_1(\xi)|+\langle\xi\rangle^{-4}|\widehat{\psi}_2(\xi)|\right)
\end{align}
with a positive constant $c$. It tells us that concerning $\xi\in\ml{Z}_{\bdd}(\varepsilon_0,N_0)\cup\ml{Z}_{\extt}(N_0)$ the solution in the Fourier space decays exponentially with suitable regular datum. In other words, the growth or decay rate of solution to Blackstock's model is determined by frequencies localizing in the small zone $\xi\in\ml{Z}_{\intt}(\varepsilon_0)$. The remaining part of this section devotes to asymptotic analysis for $\chi_{\intt}(\xi)\widehat{\psi}(t,\xi)$. So, we will expand characteristic roots in the first step.

\subsection{Asymptotic expansions of characteristic roots for small frequencies}
Indeed, \cite[Subsection 2.1]{Chen-Ikehata-Palmieri=2023} has already obtained
\begin{align}\label{C-I-P-result}
	\lambda_1=-\kappa|\xi|^2+\ml{O}(|\xi|^3)\ \ \mbox{and}\ \ \lambda_{2,3}=\pm i|\xi|-\frac{\delta}{2}|\xi|^2+\ml{O}(|\xi|^3)
\end{align}
for $\xi\in\ml{Z}_{\intt}(\varepsilon_0)$ by Taylor-like expansions up to $o(|\xi|^2)$. Nevertheless, in order to deeply study large-time profiles, particularly second-order profiles of solution, we will employ a different method (associated with explicit cubic roots) comparing with \cite{Chen-Ikehata-Palmieri=2023}.

According to the basic algebra, let us denote
\begin{align*}
P:=|\xi|^2+\left(\tilde{\gamma}-\frac{(\delta+\kappa)^2}{3}\right)|\xi|^4 \ \ \mbox{and}\ \ Q:=\frac{2\kappa-\delta}{3}|\xi|^4+\left(\frac{2(\delta+\kappa)^3}{27}-\frac{(\delta+\kappa)\tilde{\gamma}}{3} \right)|\xi|^6,
\end{align*}
and the discriminant of the cubic \eqref{Cubic} is
\begin{align*}
	\triangle_{\mathrm{Dis}}=-4P^3-27Q^2=-4|\xi|^6+(\delta^2+20\kappa\delta-8\kappa^2-12\tilde{\gamma})|\xi|^8+\ml{O}(|\xi|^{10})<0,
\end{align*} 
as $\xi\in\ml{Z}_{\intt}(\varepsilon_0)$, which implies that the cubic \eqref{Cubic} owns one real root $\lambda_1$ and two complex (non-real) conjugate roots $\lambda_{2,3}=\lambda_{\mathrm{R}}\pm i\lambda_{\mathrm{I}}$ for small frequencies. The real root can be effectively expressed by Cardano's formula as follows:
\begin{align}\label{Lambda1}
	\lambda_1=-\frac{\delta+\kappa}{3}|\xi|^2+\sum\limits_{\pm}\sqrt[3]{-\frac{1}{2}Q \pm\frac{1}{6\sqrt{3}}\sqrt{-\triangle_{\mathrm{Dis}} }}
\end{align}
for any $\xi\in\ml{Z}_{\intt}(\varepsilon_0)$ due to the negative discriminant. Vieta's formula says
\begin{align}\label{Algebraic}
\begin{cases}
\lambda_1+2\lambda_{\mathrm{R}}=-(\delta+\kappa)|\xi|^2,\\
2\lambda_1\lambda_{\mathrm{R}}+\lambda_{\mathrm{R}}^2+\lambda_{\mathrm{I}}^2=(1+\tilde{\gamma}|\xi|^2)|\xi|^2,\\
\lambda_1(\lambda_{\mathrm{R}}^2+\lambda_{\mathrm{I}}^2)=-\kappa|\xi|^4.
\end{cases}
\end{align}
The linear algebraic equation \eqref{Algebraic} associated with \eqref{Lambda1} can determine $\lambda_{\mathrm{R}},\lambda_{\mathrm{I}}\in\mb{R}$ easily.

Let us recall
\begin{align*}
(1+z)^{\frac{1}{3}}=1+\frac{z}{3}-\frac{z^2}{9}+\frac{5z^3}{81}+\ml{O}(z^4)\ \ \mbox{for}\  \ |z|\ll 1.
\end{align*}
Denoting $4\tilde{\delta}:=\delta^2+20\kappa\delta-8\kappa^2-12\tilde{\gamma}$, due to the expansions
\begin{align*}
\sqrt{-\triangle_{\mathrm{Dis}} }&=\sqrt{4|\xi|^6-4\tilde{\delta}|\xi|^8+\ml{O}(|\xi|^{10})}=2|\xi|^3\sqrt{1-\tilde{\delta}|\xi|^2+\ml{O}(|\xi|^{4})}\\
&=2|\xi|^3-\tilde{\delta}|\xi|^5+\ml{O}(|\xi|^7),
\end{align*}
we immediately get
\begin{align*}
\lambda_1+\frac{\delta+\kappa}{3}|\xi|^2&=\sum\limits_{\pm}\pm\frac{|\xi|}{\sqrt{3}}\sqrt[3]{1\mp\frac{\sqrt{3}(2\kappa-\delta)}{2}|\xi|-\frac{\tilde{\delta}}{2}|\xi|^2\mp\sqrt{3}\left(\frac{(\delta+\kappa)^3}{9}-\frac{(\delta+\kappa)\tilde{\gamma}}{2}\right)|\xi|^3+\ml{O}(|\xi|^4)}\\
&=\sum\limits_{\pm}\pm\frac{|\xi|}{\sqrt{3}}\left(1\mp\frac{\sqrt{3}(2\kappa-\delta)}{6}|\xi|-\frac{\tilde{\delta}}{6}|\xi|^2\mp\frac{\sqrt{3}}{3}\left(\frac{(\delta+\kappa)^3}{9}-\frac{(\delta+\kappa)\tilde{\gamma}}{2}\right)|\xi|^3\right.\\
&\qquad\qquad\qquad\left.-\frac{(2\kappa-\delta)^2}{12}|\xi|^2\mp\frac{\sqrt{3}(2\kappa-\delta)\tilde{\delta}}{18}|\xi|^3\mp\frac{5\sqrt{3}(2\kappa-\delta)^3}{27}|\xi|^3+\ml{O}(|\xi|^4)\right)\\
&=-\frac{2\kappa-\delta}{3}|\xi|^2+\hat{\delta}|\xi|^4+\ml{O}(|\xi|^5),
\end{align*}
where we defined \begin{align*}
\hat{\delta}&=-\frac{2(\delta+\kappa)^3}{27}+\frac{(\delta+\kappa)\tilde{\gamma}}{3}-\frac{(2\kappa-\delta)\tilde{\delta}}{9}-\frac{5(2\kappa-\delta)^3}{108}\\
&=-\kappa^2\delta+\kappa\tilde{\gamma}=-\kappa^2(\delta-\gamma b\nu).
\end{align*} Thus, we have $\lambda_1=-\kappa|\xi|^2+\hat{\delta}|\xi|^4+\ml{O}(|\xi|^5)$. The first equation of \eqref{Algebraic} indicates 
\begin{align*}
\lambda_{\mathrm{R}}=-\frac{\delta}{2}|\xi|^2-\frac{\hat{\delta}}{2}|\xi|^4+\ml{O}(|\xi|^5),
\end{align*} and the third one yields
\begin{align*}
\lambda_{\mathrm{I}}&=\sqrt{-\frac{\kappa|\xi|^4}{\lambda_1}-\lambda_{\mathrm{R}}^2}=\sqrt{\frac{|\xi|^2}{1-\frac{\hat{\delta}}{\kappa}|\xi|^2+\ml{O}(|\xi|^3)}-\frac{\delta^2}{4}|\xi|^4+\ml{O}(|\xi|^6)}\\
&=|\xi|+\frac{4\hat{\delta}-\kappa\delta^2}{8\kappa}|\xi|^3+\ml{O}(|\xi|^5).
\end{align*}

Then, summarizing the last discussion, we claim the next result.
\begin{prop}\label{Prop-Char-root}
	Let $\xi\in\ml{Z}_{\intt}(\varepsilon_0)$ with $0<\varepsilon_0\ll 1$. Then, the roots $\lambda_j$ for $j=1,2,3$ to the cubic \eqref{Cubic} can be expanded by
\begin{align*}
\lambda_1&=-\kappa|\xi|^2+\hat{\delta}|\xi|^4+\ml{O}(|\xi|^5),\\
\lambda_{2,3}&=\pm i|\xi|-\frac{\delta}{2}|\xi|^2\pm i\frac{4\hat{\delta}-\kappa\delta^2}{8\kappa}|\xi|^3-\frac{\hat{\delta}}{2}|\xi|^4+\ml{O}(|\xi|^5),
\end{align*}
where the real-valued constant $\hat{\delta}=-\kappa^2(\delta-\gamma b\nu)$. Additionally, last two roots fulfill $\mathrm{Re}\lambda_2=\mathrm{Re}\lambda_3$ and $\mathrm{Im}\lambda_2+\mathrm{Im}\lambda_3=0$.
\end{prop}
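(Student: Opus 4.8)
The plan is to regard the cubic \eqref{Cubic} as a one-parameter perturbation problem in the small quantity $|\xi|$ and to read off all three roots from Cardano's formula, rather than from an implicit-function or Newton-polygon argument. The negativity of the discriminant $\triangle_{\mathrm{Dis}}=-4|\xi|^{6}+\ml{O}(|\xi|^{8})<0$ on $\ml{Z}_{\intt}(\varepsilon_0)$ has already been recorded, so for every $\xi$ in this zone there is exactly one real root $\lambda_1$ and one complex conjugate pair $\lambda_{2,3}=\lambda_{\mathrm{R}}\pm i\lambda_{\mathrm{I}}$; it therefore suffices to expand $\lambda_1$, $\lambda_{\mathrm{R}}$ and $\lambda_{\mathrm{I}}$.

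First I would pin down $\lambda_1$ through the depressed cubic $\mu^{3}+P\mu+Q=0$ obtained from $\lambda=\mu-\frac{\delta+\kappa}{3}|\xi|^{2}$, together with the Cardano representation \eqref{Lambda1} and Vieta's relations \eqref{Algebraic}. The crucial preliminary computation is the square root of the discriminant: since $-\triangle_{\mathrm{Dis}}=4|\xi|^{6}\bigl(1-\tilde{\delta}|\xi|^{2}+\ml{O}(|\xi|^{4})\bigr)$ with $4\tilde{\delta}=\delta^{2}+20\kappa\delta-8\kappa^{2}-12\tilde{\gamma}$, one gets $\sqrt{-\triangle_{\mathrm{Dis}}}=2|\xi|^{3}-\tilde{\delta}|\xi|^{5}+\ml{O}(|\xi|^{7})$. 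Substituting this into \eqref{Lambda1}, each of the two cube roots factors as $\pm\frac{|\xi|}{\sqrt{3}}\sqrt[3]{1+z_{\pm}}$ with $z_{\pm}$ an explicit power series in $|\xi|$ starting at order $|\xi|$; applying the binomial expansion $(1+z)^{1/3}=1+\frac{z}{3}-\frac{z^{2}}{9}+\frac{5z^{3}}{81}+\ml{O}(z^{4})$ to each, summing the two $\pm$ contributions, restoring the shift $-\frac{\delta+\kappa}{3}|\xi|^{2}$ and collecting terms up to $\ml{O}(|\xi|^{5})$ yields $\lambda_1=-\kappa|\xi|^{2}+\hat{\delta}|\xi|^{4}+\ml{O}(|\xi|^{5})$. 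Here the a priori messy coefficient $\hat{\delta}=-\frac{2(\delta+\kappa)^{3}}{27}+\frac{(\delta+\kappa)\tilde{\gamma}}{3}-\frac{(2\kappa-\delta)\tilde{\delta}}{9}-\frac{5(2\kappa-\delta)^{3}}{108}$ collapses, after inserting $\tilde{\delta}$ and $\tilde{\gamma}=\gamma b\nu\kappa$, to $-\kappa^{2}\delta+\kappa\tilde{\gamma}=-\kappa^{2}(\delta-\gamma b\nu)$.

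With $\lambda_1$ in hand, the complex pair comes from \eqref{Algebraic} purely algebraically. The first relation gives $\lambda_{\mathrm{R}}=-\tfrac12\bigl((\delta+\kappa)|\xi|^{2}+\lambda_1\bigr)=-\tfrac{\delta}{2}|\xi|^{2}-\tfrac{\hat{\delta}}{2}|\xi|^{4}+\ml{O}(|\xi|^{5})$; the third relation gives $\lambda_{\mathrm{R}}^{2}+\lambda_{\mathrm{I}}^{2}=-\kappa|\xi|^{4}/\lambda_1=|\xi|^{2}\bigl(1-\tfrac{\hat{\delta}}{\kappa}|\xi|^{2}+\ml{O}(|\xi|^{3})\bigr)^{-1}$, and subtracting $\lambda_{\mathrm{R}}^{2}=\tfrac{\delta^{2}}{4}|\xi|^{4}+\ml{O}(|\xi|^{6})$ and taking the positive square root (legitimate because $\lambda_{\mathrm{I}}>0$ for $|\xi|$ small, by continuity from $\lambda_{\mathrm{I}}\sim|\xi|$) produces $\lambda_{\mathrm{I}}=|\xi|+\tfrac{4\hat{\delta}-\kappa\delta^{2}}{8\kappa}|\xi|^{3}+\ml{O}(|\xi|^{5})$; the second relation of \eqref{Algebraic} then serves as a consistency check. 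Finally $\mathrm{Re}\lambda_2=\mathrm{Re}\lambda_3=\lambda_{\mathrm{R}}$ and $\mathrm{Im}\lambda_2+\mathrm{Im}\lambda_3=0$ are immediate from $\lambda_{2,3}=\lambda_{\mathrm{R}}\pm i\lambda_{\mathrm{I}}$. The main obstacle is purely computational: keeping track of enough orders in the cube-root expansions — in particular the $\ml{O}(|\xi|^{3})$ terms inside the radicands, which feed into the $|\xi|^{4}$ coefficient after multiplication by the $\pm|\xi|/\sqrt{3}$ prefactor — and verifying that the cancellations collapsing $\hat{\delta}$ to $-\kappa^{2}(\delta-\gamma b\nu)$ are correct.
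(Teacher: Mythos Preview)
Your proposal is correct and follows essentially the same route as the paper: Cardano's formula \eqref{Lambda1} for $\lambda_1$ via the binomial expansion of the two cube roots (after expanding $\sqrt{-\triangle_{\mathrm{Dis}}}$), followed by the first and third Vieta relations \eqref{Algebraic} to recover $\lambda_{\mathrm{R}}$ and $\lambda_{\mathrm{I}}$. The only minor additions in your write-up are the explicit remark that $\lambda_{\mathrm{I}}>0$ justifies taking the positive square root and that the second Vieta relation can serve as a consistency check; both are harmless refinements of the paper's argument.
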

\begin{remark}
In the comparison with those in \cite{Chen-Ikehata-Palmieri=2023}, i.e. the expressions \eqref{C-I-P-result}, the higher-order expansions of three characteristic roots are provided in Proposition \ref{Prop-Char-root}. The explicit representations in terms of $|\xi|^3$ and $|\xi|^4$ will contribute to improved decay rate when we consider second-order asymptotic profiles.
\end{remark}
\subsection{Asymptotic profiles of kernels for small frequencies}
From pairwise distinct characteristic roots derived in Proposition \ref{Prop-Char-root}, the solution to the Cauchy problem \eqref{Eq-Blackstock-Fourier} when $\xi\in\ml{Z}_{\intt}(\varepsilon_0)$ can be represented via
\begin{align}\label{Representation_Fourier_u}
	\widehat{\psi} (t,\xi)=\widehat{K}_0(t,|\xi|)\widehat{\psi}_0 (\xi)+\widehat{K}_1(t,|\xi|)\widehat{\psi}_1 (\xi)+\widehat{K}_2(t,|\xi|)\widehat{\psi}_2 (\xi),
\end{align}
where the kernels in the Fourier space have the representations
\begin{align*}
	\widehat{K}_0(t,|\xi|)&:=\sum\limits_{j=1,2,3}\frac{\exp(\lambda_jt)\prod_{k=1,2,3,\ k\neq j}\lambda_k}{\prod_{k=1,2,3,\ k\neq j}(\lambda_j-\lambda_k)},\\
	\widehat{K}_1(t,|\xi|)&:=-\sum\limits_{j=1,2,3}\frac{\exp(\lambda_jt)\sum_{k=1,2,3,\ k\neq j}\lambda_k}{\prod_{k=1,2,3,\ k\neq j}(\lambda_j-\lambda_k)},\\
	\widehat{K}_2(t,|\xi|)&:=\sum\limits_{j=1,2,3}\frac{\exp(\lambda_jt)}{\prod_{k=1,2,3,\ k\neq j}(\lambda_j-\lambda_k)},
\end{align*}
where $\lambda_j$ were given in Proposition \ref{Prop-Char-root}.

To understand the decay properties gradually, we next propose sharper estimates for the kernels, especially for $\chi_{\intt}(\xi)\widehat{K}_2(t,|\xi|)$, than those in \cite[Part IV]{Chen-Ikehata-Palmieri=2023}.
\begin{prop}\label{Prop-Zero-Order}
Let $\xi\in\ml{Z}_{\intt}(\varepsilon_0)$ with $0<\varepsilon_0\ll 1$. Then, the kernels fulfill the following pointwise estimates in the Fourier space:
\begin{align*}
\chi_{\intt}(\xi)|\widehat{K}_0(t,|\xi|)|&\lesssim\chi_{\intt}(\xi)\mathrm{e}^{-c|\xi|^2t},\\
\chi_{\intt}(\xi)|\widehat{K}_1(t,|\xi|)|&\lesssim\chi_{\intt}(\xi)\left(1+\frac{|\sin(|\xi|t)|}{|\xi|}\right)\mathrm{e}^{-c|\xi|^2t},\\
\chi_{\intt}(\xi)|\widehat{K}_2(t,|\xi|)|&\lesssim\chi_{\intt}(\xi)\left(|\cos(|\xi|t)|t+\frac{|\sin(|\xi|t)|}{|\xi|}+\frac{|\sin(\frac{|\xi|}{2}t)|^2}{|\xi|^2} \right)\mathrm{e}^{-c|\xi|^2t},
\end{align*}
with some constants $c>0$.
\end{prop}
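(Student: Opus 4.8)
The plan is to derive the three pointwise bounds by inserting the refined expansions of $\lambda_1,\lambda_{2,3}$ from Proposition~\ref{Prop-Char-root} into the explicit kernel formulas for $\widehat{K}_0,\widehat{K}_1,\widehat{K}_2$ and carefully tracking the singular denominators. First I would compute, for $\xi\in\ml{Z}_{\intt}(\varepsilon_0)$, the three differences of roots: $\lambda_1-\lambda_2=-i|\xi|+\ml{O}(|\xi|^2)$, $\lambda_1-\lambda_3=i|\xi|+\ml{O}(|\xi|^2)$, and $\lambda_2-\lambda_3=2i\lambda_{\mathrm{I}}=2i|\xi|+\ml{O}(|\xi|^3)$, so that $\prod_{k\neq 1}(\lambda_1-\lambda_k)=|\xi|^2+\ml{O}(|\xi|^3)\asymp|\xi|^2$ and $\prod_{k\neq 2}(\lambda_2-\lambda_k)=(\lambda_2-\lambda_1)(\lambda_2-\lambda_3)\asymp|\xi|\cdot|\xi|=|\xi|^2$, similarly for $j=3$. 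Likewise the numerators: for $\widehat{K}_0$ one has $\lambda_2\lambda_3=\lambda_{\mathrm{R}}^2+\lambda_{\mathrm{I}}^2\asymp|\xi|^2$ and $\lambda_1\lambda_3,\lambda_1\lambda_2\asymp|\xi|\cdot|\xi|^2=|\xi|^3$ (since $|\lambda_1|\asymp|\xi|^2$ and $|\lambda_{2,3}|\asymp|\xi|$). Combining, the $j=1$ term of $\widehat{K}_0$ is $\ml{O}(|\xi|^2/|\xi|^2)\mathrm{e}^{\lambda_1 t}=\ml{O}(1)\mathrm{e}^{-c|\xi|^2t}$ using $\mathrm{Re}\lambda_1=-\kappa|\xi|^2+\ml{O}(|\xi|^4)\leqslant -c|\xi|^2$ on the interior zone, and the $j=2,3$ terms are $\ml{O}(|\xi|^3/|\xi|^2)\mathrm{e}^{\lambda_{2,3}t}=\ml{O}(|\xi|)\mathrm{e}^{-c|\xi|^2t}$ using $\mathrm{Re}\lambda_{2,3}=-\frac{\delta}{2}|\xi|^2+\ml{O}(|\xi|^4)\leqslant -c|\xi|^2$; this gives the first estimate.

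For $\widehat{K}_1$ the numerators are $-(\lambda_2+\lambda_3)=-2\lambda_{\mathrm{R}}\asymp|\xi|^2$ for $j=1$ and $-(\lambda_1+\lambda_3),-(\lambda_1+\lambda_2)\asymp|\xi|$ for $j=2,3$; so the $j=1$ term contributes $\ml{O}(1)\mathrm{e}^{-c|\xi|^2t}$ and the conjugate pair contributes $\ml{O}(|\xi|/|\xi|^2)\mathrm{e}^{-c|\xi|^2t}=\ml{O}(|\xi|^{-1})\mathrm{e}^{-c|\xi|^2t}$ — but the key refinement is that the sum of the conjugate pair combines, using $\mathrm{Re}\lambda_2=\mathrm{Re}\lambda_3=\lambda_{\mathrm{R}}$ and $\mathrm{Im}\lambda_2=-\mathrm{Im}\lambda_3=\lambda_{\mathrm{I}}$, into a term of the form $\mathrm{e}^{\lambda_{\mathrm{R}}t}\big(A\cos(\lambda_{\mathrm{I}}t)+B\sin(\lambda_{\mathrm{I}}t)\big)$ where the cosine-coefficient $A$ is $\ml{O}(1)$ (the $|\xi|^{-1}$ singularities cancel in the real part) while only the sine-coefficient $B$ carries an $\ml{O}(|\xi|^{-1})$ factor; since $\lambda_{\mathrm{I}}=|\xi|+\ml{O}(|\xi|^3)$ and $t\leqslant \varepsilon_0^{-2}(|\xi|^2t)$ makes the error $\ml{O}(|\xi|^3 t)$ harmless on bounded-in-$|\xi|^2t$ regions (and the exponential kills large $|\xi|^2t$), one may replace $\cos(\lambda_{\mathrm{I}}t),\sin(\lambda_{\mathrm{I}}t)$ by $\cos(|\xi|t),\sin(|\xi|t)$ up to an $\ml{O}(|\xi|)\mathrm{e}^{-c|\xi|^2t}$ remainder absorbed in the constant term. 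This yields $\chi_{\intt}(\xi)|\widehat{K}_1|\lesssim\chi_{\intt}(\xi)(1+|\sin(|\xi|t)|/|\xi|)\mathrm{e}^{-c|\xi|^2t}$.

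The delicate one is $\widehat{K}_2$, where all three numerators are $1$ and the denominators are only $\asymp|\xi|^2$, so a naive bound gives $\ml{O}(|\xi|^{-2})$ which is too crude. The plan here is a two-step cancellation: write $\widehat{K}_2=\frac{\mathrm{e}^{\lambda_1 t}}{(\lambda_1-\lambda_2)(\lambda_1-\lambda_3)}+\frac{\mathrm{e}^{\lambda_2 t}}{(\lambda_2-\lambda_1)(\lambda_2-\lambda_3)}+\frac{\mathrm{e}^{\lambda_3 t}}{(\lambda_3-\lambda_1)(\lambda_3-\lambda_2)}$. First pair the two conjugate ($j=2,3$) terms; since $\lambda_2-\lambda_3=2i\lambda_{\mathrm{I}}$ and $(\lambda_2-\lambda_1)(\lambda_3-\lambda_1)=|\lambda_2-\lambda_1|^2\asymp|\xi|^2$ are real, their sum equals $\frac{1}{|\lambda_2-\lambda_1|^2}\cdot\frac{\mathrm{e}^{\lambda_{\mathrm{R}}t}\sin(\lambda_{\mathrm{I}}t)}{\lambda_{\mathrm{I}}}\cdot(\text{real }\ml{O}(1))$, giving the $|\sin(|\xi|t)|/|\xi|^2\cdot|\xi|^{?}$ ... more precisely $\ml{O}(|\xi|^{-2})\mathrm{e}^{-c|\xi|^2t}|\sin(\lambda_{\mathrm{I}}t)|/\lambda_{\mathrm{I}}=\ml{O}(|\xi|^{-1})|\sin(|\xi|t)|\,|\xi|^{-1}\cdots$; to get the stated $|\sin(|\xi|t)|/|\xi|+|\cos(|\xi|t)|t+|\sin(\frac{|\xi|}{2}t)|^2/|\xi|^2$ form one then combines this conjugate-pair sum with the real $j=1$ term, using the near-equality $\lambda_1\approx-\kappa|\xi|^2$ vs.\ $\lambda_{\mathrm{R}}\approx-\frac\delta2|\xi|^2$ and the elementary identities $\frac{\mathrm{e}^{at}-\mathrm{e}^{bt}}{a-b}$-type difference quotients together with $1-\cos\theta=2\sin^2(\theta/2)$, so that the leading $|\xi|^{-2}$ parts cancel against each other, leaving a $\frac{|\sin(\frac{|\xi|}{2}t)|^2}{|\xi|^2}$ survivor from the mismatch between $\cos(|\xi|t)$-oscillation and the two distinct real exponentials, an $|\cos(|\xi|t)|t$ survivor from differentiating $\mathrm{e}^{\lambda_{\mathrm{R}}t}$ in the $\lambda$-direction, and an $|\sin(|\xi|t)|/|\xi|$ survivor of lower order. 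The main obstacle is precisely this last cancellation for $\widehat{K}_2$: one must organize the algebra so that the three $\ml{O}(|\xi|^{-2})$ contributions telescope and only the three named quantities — each of which is $\ml{O}(t)$ or $\ml{O}(|\xi|^{-1})$ or $\ml{O}(t^2|\xi|^0)$ but always dominated by $\mathrm{e}^{c|\xi|^2t}$ times something integrable-against-$L^1$-data in the later $L^2$ estimates — remain, and to verify uniformity of all $\ml{O}(\cdot)$ constants over $\xi\in\ml{Z}_{\intt}(\varepsilon_0)$. Throughout, the systematic device is $|\xi|^{m}t^{k}\mathrm{e}^{-c|\xi|^2t}=\big(|\xi|^2t\big)^{k}|\xi|^{m-2k}\mathrm{e}^{-c|\xi|^2t}\lesssim|\xi|^{m-2k}$ together with $\varepsilon_0\ll1$, which converts powers of $t$ into harmless powers of $|\xi|$ and keeps every remainder genuinely lower order than the displayed main terms.
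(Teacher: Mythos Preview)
Your plan is correct and essentially follows the paper's approach: pair the two complex-conjugate roots, rewrite the resulting combination in terms of $\cos(\lambda_{\mathrm{I}}t)$ and $\sin(\lambda_{\mathrm{I}}t)$, and then extract the three displayed terms for $\widehat{K}_2$ via the identity $1-\cos\theta=2\sin^2(\theta/2)$ together with a difference-quotient representation of $\mathrm{e}^{\lambda_1t}-\mathrm{e}^{\lambda_{\mathrm{R}}t}$.

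The paper organizes the $\widehat{K}_2$ step more cleanly than your sketch by first writing the closed form
\[
\widehat{K}_2(t,|\xi|)=\frac{1}{P_0}\left(\mathrm{e}^{\lambda_1t}-\cos(\lambda_{\mathrm{I}}t)\mathrm{e}^{\lambda_{\mathrm{R}}t}+\frac{\lambda_{\mathrm{R}}-\lambda_1}{\lambda_{\mathrm{I}}}\sin(\lambda_{\mathrm{I}}t)\mathrm{e}^{\lambda_{\mathrm{R}}t}\right),
\qquad P_0=(\lambda_{\mathrm{R}}-\lambda_1)^2+\lambda_{\mathrm{I}}^2\asymp|\xi|^2,
\]
and then applying the single algebraic splitting
\[
\mathrm{e}^{\lambda_1t}-\cos(\lambda_{\mathrm{I}}t)\mathrm{e}^{\lambda_{\mathrm{R}}t}
=2\mathrm{e}^{\lambda_1t}\sin^2\!\left(\tfrac{\lambda_{\mathrm{I}}}{2}t\right)
-\cos(\lambda_{\mathrm{I}}t)\,(\lambda_{\mathrm{R}}-\lambda_1)t\,\mathrm{e}^{\lambda_1t}\!\int_0^1\mathrm{e}^{(\lambda_{\mathrm{R}}-\lambda_1)t\tau}\mathrm{d}\tau.
\]
This makes the ``telescoping'' you allude to completely transparent: the three pieces on the right give, after dividing by $P_0\asymp|\xi|^2$ and using $|\lambda_{\mathrm{R}}-\lambda_1|\asymp|\xi|^2$, $|\lambda_{\mathrm{I}}|\asymp|\xi|$, exactly the $|\sin(\frac{|\xi|}{2}t)|^2/|\xi|^2$, the $|\cos(|\xi|t)|t$, and the $|\sin(|\xi|t)|/|\xi|$ contributions. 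Your description arrives at the same place but leaves the algebra implicit; if you rewrite your conjugate-pair sum plus the $j=1$ term in the above closed form first, the hand-wavy ``\dots'' in your sketch disappear.
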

\begin{remark}
In the estimate of $\chi_{\intt}(\xi)|\widehat{K}_2(t,|\xi|)|$, we improved the one in \cite{Chen-Ikehata-Palmieri=2023} by recovering the factor $|\sin(|\xi|t)|$ in the second term, which allows us to compensate a singularity as $|\xi|\to0$ for lower dimensions.
\end{remark}
\begin{proof}
The first two estimates have been derived in \cite[Part IV]{Chen-Ikehata-Palmieri=2023} already, and we just need to demonstrate the last one, which needs careful analysis in the Fourier space. We notice that
\begin{align}\label{P0}
P_0=(\lambda_{\mathrm{R}}-\lambda_1)^2+\lambda_{\mathrm{I}}^2=|\xi|^2+\ml{O}(|\xi|^4)\ \ \mbox{for}\ \ \xi\in\ml{Z}_{\intt}(\varepsilon_0).
\end{align}
 Benefit from some adaptable structures of characteristic roots, we rewrite the third kernel by
\begin{align*}
\widehat{K}_2(t,|\xi|)=\frac{1}{P_0}\left(\mathrm{e}^{\lambda_1t}-\cos(\lambda_{\mathrm{I}}t)\mathrm{e}^{\lambda_{\mathrm{R}}t}+\frac{\lambda_{\mathrm{R}}-\lambda_{\mathrm{1}}}{\lambda_{\mathrm{I}}}\sin(\lambda_{\mathrm{I}}t)\mathrm{e}^{\lambda_{\mathrm{R}}t}\right).
\end{align*}
We propose the fundamental trick
\begin{align*}
	\mathrm{e}^{\lambda_1t}-\cos(\lambda_{\mathrm{I}}t)\mathrm{e}^{\lambda_{\mathrm{R}}t}&=\mathrm{e}^{\lambda_1t}\big(1-\cos(\lambda_{\mathrm{I}}t)\big)+\cos(\lambda_{\mathrm{I}}t)\left(\mathrm{e}^{\lambda_1t}-\mathrm{e}^{\lambda_{\mathrm{R}}t}\right)\\
	&=2\mathrm{e}^{\lambda_1t}\sin^2\left(\tfrac{\lambda_{\mathrm{I}}}{2}t\right)-\cos(\lambda_{\mathrm{I}}t )\mathrm{e}^{\lambda_1t}(\lambda_{\mathrm{R}}-\lambda_{\mathrm{1}})t\int_0^1\mathrm{e}^{(\lambda_{\mathrm{R}}-\lambda_{\mathrm{1}})t\tau}\mathrm{d}\tau.
\end{align*}
To sum up, we arrive at
\begin{align*}
	\chi_{\intt}(\xi)|\widehat{K}_2(t,|\xi|)|&\lesssim\chi_{\intt}(\xi)\frac{\sin^2\left(\frac{\lambda_\mathrm{I}}{2}t\right)}{P_0}\mathrm{e}^{\lambda_1t}+\chi_{\intt}(\xi)\frac{|\lambda_{\mathrm{R}}-\lambda_{\mathrm{1}}|}{P_0|\lambda_{\mathrm{I}}|}\left|\sin(\lambda_{\mathrm{I}}t)\right|\mathrm{e}^{\lambda_{\mathrm{R}}t}\\
	&\quad+\chi_{\intt}(\xi)\left|\cos(\lambda_{\mathrm{I}}t)\right|\frac{|\lambda_{\mathrm{R}}-\lambda_{\mathrm{1}}|}{P_0}t\mathrm{e}^{\max\{\lambda_1,\lambda_{\mathrm{R}}\}t}.
\end{align*}
Finally, by plugging the characteristic roots stated in Proposition \ref{Prop-Char-root} as well as \eqref{P0}, we complete the proof.
\end{proof}

Rearranging the representation of $\widehat{\psi}(t,\xi)$ for small frequencies, we are able to express
\begin{align*}
	\widehat{\psi}(t,\xi)&=\frac{-(\lambda_{\mathrm{I}}^2+\lambda_{\mathrm{R}}^2)\widehat{\psi}_0+2\lambda_{\mathrm{R}}\widehat{\psi}_1-\widehat{\psi}_2}{2\lambda_{\mathrm{R}}\lambda_1-\lambda_{\mathrm{I}}^2-\lambda_{\mathrm{R}}^2-\lambda_1^2}\mathrm{e}^{\lambda_1t}+\frac{(2\lambda_{\mathrm{R}}\lambda_1-\lambda_1^2)\widehat{\psi}_0-2\lambda_{\mathrm{R}}\widehat{\psi}_1+\widehat{\psi}_2}{2\lambda_{\mathrm{R}}\lambda_1-\lambda_{\mathrm{I}}^2-\lambda_{\mathrm{R}}^2-\lambda_1^2}\cos(\lambda_{\mathrm{I}}t)\mathrm{e}^{\lambda_{\mathrm{R}}t}\\
	&\quad+\frac{\lambda_1(\lambda_{\mathrm{R}}\lambda_1+\lambda_{\mathrm{I}}^2-\lambda_{\mathrm{R}}^2)\widehat{\psi}_0+(\lambda_{\mathrm{R}}^2-\lambda_{\mathrm{I}}^2-\lambda_1^2)\widehat{\psi}_1-(\lambda_{\mathrm{R}}-\lambda_1)\widehat{\psi}_2}{\lambda_{\mathrm{I}}(2\lambda_{\mathrm{R}}\lambda_1-\lambda_{\mathrm{I}}^2-\lambda_{\mathrm{R}}^2-\lambda_1^2)}\sin(\lambda_{\mathrm{I}}t)\mathrm{e}^{\lambda_{\mathrm{R}}t}.
\end{align*}
Motivated by asymptotic expansions of characteristic roots, to extract the dominant parts we may introduce two auxiliary functions in the Fourier space as follows:
\begin{align*}
\widehat{J}_0(t,\xi)&:=\frac{\cos(\lambda_{\mathrm{I}} t)\mathrm{e}^{\lambda_{\mathrm{R}}t}-\mathrm{e}^{\lambda_1t}}{2\lambda_{\mathrm{R}}\lambda_1-\lambda_{\mathrm{I}}^2-\lambda_{\mathrm{R}}^2-\lambda_1^2}\widehat{\psi}_2,\\
\widehat{J}_1(t,\xi)&:=\frac{-\lambda_{\mathrm{I}}^2\sin(\lambda_{\mathrm{I}}t)\mathrm{e}^{\lambda_{\mathrm{R}}t}}{\lambda_{\mathrm{I}}(2\lambda_{\mathrm{R}}\lambda_1-\lambda_{\mathrm{I}}^2-\lambda_{\mathrm{R}}^2-\lambda_1^2)}\widehat{\psi}_1+\frac{-(\lambda_{\mathrm{R}}-\lambda_1)\sin(\lambda_{\mathrm{I}}t)\mathrm{e}^{\lambda_{\mathrm{R}}t}}{\lambda_{\mathrm{I}}(2\lambda_{\mathrm{R}}\lambda_1-\lambda_{\mathrm{I}}^2-\lambda_{\mathrm{R}}^2-\lambda_1^2)}\widehat{\psi}_2.
\end{align*}
Some straightforward subtractions conclude the next proposition easily.
\begin{prop}\label{Prop-3.3}
Let $\xi\in\ml{Z}_{\intt}(\varepsilon_0)$ with $0<\varepsilon_0\ll 1$. Then, the solution to the Cauchy problem \eqref{Eq-Blackstock-Fourier} fulfills the following refined estimates in the Fourier space:
\begin{align*}
	\chi_{\intt}(\xi)|\widehat{\psi}(t,\xi)-\widehat{J}_0(t,\xi)|&\lesssim \chi_{\intt}(\xi)\mathrm{e}^{-c|\xi|^2t}\left(|\widehat{\psi}_0|+\frac{|\sin(|\xi|t)|}{|\xi|}(|\widehat{\psi}_1|+|\widehat{\psi}_2|)\right),\\
	\chi_{\intt}(\xi)|\widehat{\psi}(t,\xi)-\widehat{J}_0(t,\xi)-\widehat{J}_1(t,\xi)|&\lesssim \chi_{\intt}(\xi)\mathrm{e}^{-c|\xi|^2t}(|\widehat{\psi}_0|+|\widehat{\psi}_1|),
\end{align*}
with some constants $c>0$.
\end{prop}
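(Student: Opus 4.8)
The plan is to work entirely from the explicit representation of $\widehat{\psi}(t,\xi)$, valid on $\ml{Z}_{\intt}(\varepsilon_0)$, that is displayed just before the statement, regrouped according to the initial data $\widehat{\psi}_0,\widehat{\psi}_1,\widehat{\psi}_2$; this regrouping merely recovers $\widehat{\psi}=\widehat{K}_0\widehat{\psi}_0+\widehat{K}_1\widehat{\psi}_1+\widehat{K}_2\widehat{\psi}_2$ with the three kernels $\widehat{K}_0,\widehat{K}_1,\widehat{K}_2$ introduced above, so that Proposition \ref{Prop-Zero-Order} is directly applicable. Throughout I set $D:=2\lambda_{\mathrm{R}}\lambda_1-\lambda_{\mathrm{I}}^2-\lambda_{\mathrm{R}}^2-\lambda_1^2=-P_0$ with $P_0=(\lambda_{\mathrm{R}}-\lambda_1)^2+\lambda_{\mathrm{I}}^2$, and record the following power-counting data on $\ml{Z}_{\intt}(\varepsilon_0)$, all immediate from Proposition \ref{Prop-Char-root} together with \eqref{P0}: $|P_0|\asymp|\xi|^2$, $|\lambda_1|\asymp|\xi|^2$, $|\lambda_{\mathrm{R}}|\asymp|\xi|^2$, $|\lambda_{\mathrm{I}}|\asymp|\xi|$, $\lambda_{\mathrm{R}}-\lambda_1=\tfrac{2\kappa-\delta}{2}|\xi|^2+\ml{O}(|\xi|^4)$ (nonzero, since $0<\kappa\ll\delta$), $\lambda_1^2-\lambda_{\mathrm{R}}^2=\ml{O}(|\xi|^4)$ and $\lambda_{\mathrm{I}}-|\xi|=\ml{O}(|\xi|^3)$.

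For the first estimate I would subtract $\widehat{J}_0$, whose $\widehat{\psi}_2$-kernel is precisely the nonoscillatory block $\tfrac{\cos(\lambda_{\mathrm{I}}t)\mathrm{e}^{\lambda_{\mathrm{R}}t}-\mathrm{e}^{\lambda_1t}}{D}$ of $\widehat{K}_2$. Recalling the identity for $\widehat{K}_2$ established in the proof of Proposition \ref{Prop-Zero-Order}, the surviving $\widehat{\psi}_2$-kernel after subtraction is $\tfrac{\lambda_{\mathrm{R}}-\lambda_1}{\lambda_{\mathrm{I}}P_0}\sin(\lambda_{\mathrm{I}}t)\mathrm{e}^{\lambda_{\mathrm{R}}t}$, which by the power count is $\lesssim\tfrac{|\sin(\lambda_{\mathrm{I}}t)|}{|\xi|}\,\mathrm{e}^{-c|\xi|^2t}$; the $\widehat{\psi}_0$- and $\widehat{\psi}_1$-kernels are untouched and are absorbed by the $\widehat{K}_0$- and $\widehat{K}_1$-bounds of Proposition \ref{Prop-Zero-Order}. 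It then remains to trade $\sin(\lambda_{\mathrm{I}}t)$ for $\sin(|\xi|t)$: since $\lambda_{\mathrm{I}}-|\xi|=\ml{O}(|\xi|^3)$ one has $|\sin(\lambda_{\mathrm{I}}t)-\sin(|\xi|t)|\lesssim|\xi|^3t$, and the resulting extra term $|\xi|^2t\,\mathrm{e}^{-c|\xi|^2t}\lesssim\mathrm{e}^{-c'|\xi|^2t}$ is absorbed into the parabolic weight.

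For the second estimate I would additionally subtract $\widehat{J}_1$. The decisive algebraic observation, read off directly from the identity for $\widehat{K}_2$ in the proof of Proposition \ref{Prop-Zero-Order} and from the definitions of $\widehat{J}_0$ and $\widehat{J}_1$, is that the $\widehat{\psi}_2$-part of $\widehat{J}_0+\widehat{J}_1$ equals the whole of $\widehat{K}_2\widehat{\psi}_2$, so that no $\widehat{\psi}_2$-contribution survives. The residual $\widehat{\psi}_1$-kernel is $\tfrac{2\lambda_{\mathrm{R}}}{P_0}\big(\cos(\lambda_{\mathrm{I}}t)\mathrm{e}^{\lambda_{\mathrm{R}}t}-\mathrm{e}^{\lambda_1t}\big)+\tfrac{\lambda_1^2-\lambda_{\mathrm{R}}^2}{\lambda_{\mathrm{I}}P_0}\sin(\lambda_{\mathrm{I}}t)\mathrm{e}^{\lambda_{\mathrm{R}}t}$; the second summand gains two powers of $|\xi|$ and is $\lesssim|\xi|\,\mathrm{e}^{-c|\xi|^2t}$, while the first I would treat by the fundamental trick of Proposition \ref{Prop-Zero-Order}, namely $\mathrm{e}^{\lambda_1t}-\cos(\lambda_{\mathrm{I}}t)\mathrm{e}^{\lambda_{\mathrm{R}}t}=2\mathrm{e}^{\lambda_1t}\sin^2\!\big(\tfrac{\lambda_{\mathrm{I}}}{2}t\big)+\cos(\lambda_{\mathrm{I}}t)\big(\mathrm{e}^{\lambda_1t}-\mathrm{e}^{\lambda_{\mathrm{R}}t}\big)$, together with $|\mathrm{e}^{\lambda_1t}-\mathrm{e}^{\lambda_{\mathrm{R}}t}|\leqslant|\lambda_1-\lambda_{\mathrm{R}}|\,t\,\mathrm{e}^{\max\{\lambda_1,\lambda_{\mathrm{R}}\}t}\lesssim|\xi|^2t\,\mathrm{e}^{-c|\xi|^2t}$ and $\sin^2(\cdot)\leqslant1$; each of the two pieces is then $\lesssim\mathrm{e}^{-c'|\xi|^2t}$. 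With $|\widehat{K}_0|\lesssim\mathrm{e}^{-c|\xi|^2t}$ disposing of the $\widehat{\psi}_0$-kernel, the second estimate follows.

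The hard part is the bookkeeping rather than any individual inequality: one has to match every coefficient of the explicit representation against $\widehat{J}_0$ and $\widehat{J}_1$ by means of Vieta's relations \eqref{Algebraic}, and, above all, keep track of how many extra powers of $|\xi|$ each residual kernel carries --- this is exactly where the refined expansions of Proposition \ref{Prop-Char-root}, to order $|\xi|^3$ and $|\xi|^4$, are indispensable and the coarser expansions \eqref{C-I-P-result} do not suffice. A secondary, more analytic, difficulty is the mismatch between the heat-type root $\lambda_1$ and the wave-type roots $\lambda_{2,3}$: replacing $\sin(\lambda_{\mathrm{I}}t)$ by $\sin(|\xi|t)$ and comparing $\mathrm{e}^{\lambda_1t}$ with $\mathrm{e}^{\lambda_{\mathrm{R}}t}$ and with $\cos(\lambda_{\mathrm{I}}t)\mathrm{e}^{\lambda_{\mathrm{R}}t}$ each costs only a factor of order $|\xi|^2t$, which is harmless because $|\xi|^2t\,\mathrm{e}^{-c|\xi|^2t}\lesssim\mathrm{e}^{-c'|\xi|^2t}$.
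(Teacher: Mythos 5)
Your proposal is correct and takes essentially the same route as the paper, whose own ``proof'' of this proposition is literally the sentence that straightforward subtractions from the displayed representation of $\widehat{\psi}(t,\xi)$ conclude the claim: you supply exactly those subtractions, and in particular you correctly identify the residual $\widehat{\psi}_2$-kernel $\frac{\lambda_{\mathrm{R}}-\lambda_1}{\lambda_{\mathrm{I}}P_0}\sin(\lambda_{\mathrm{I}}t)\mathrm{e}^{\lambda_{\mathrm{R}}t}$ after removing $\widehat{J}_0$, the exact cancellation of the entire $\widehat{\psi}_2$-contribution after further removing $\widehat{J}_1$, and the gain of $|\xi|$ in the $(\lambda_1^2-\lambda_{\mathrm{R}}^2)/(\lambda_{\mathrm{I}}P_0)$ factor. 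One small remark: the bound you actually derive for the first estimate carries additional standalone terms $\mathrm{e}^{-c|\xi|^2t}(|\widehat{\psi}_1|+|\widehat{\psi}_2|)$, coming from the ``$1$'' in the $\widehat{K}_1$-estimate of Proposition \ref{Prop-Zero-Order} and from the $\sin(\lambda_{\mathrm{I}}t)\mapsto\sin(|\xi|t)$ swap; these cannot be absorbed pointwise into $\frac{|\sin(|\xi|t)|}{|\xi|}$ (consider $|\xi|t$ near $\pi$), so the first inequality should really be read with the factor $1+\frac{|\sin(|\xi|t)|}{|\xi|}$ --- an imprecision in the statement rather than in your argument, and harmless in every subsequent $L^2$ application since $t^{-\frac{n}{4}}\lesssim\ml{D}_n^{(1)}(t)$.
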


Let us construct the following symbols related to diffusion-waves by ignoring higher-order terms in $\widehat{J}_0(t,\xi)$ and $\widehat{J}_1(t,\xi)$, respectively:
\begin{align}\label{Sym-01}
\widehat{G}_0(t,|\xi|)=\frac{1}{|\xi|^2}\left(\mathrm{e}^{-\kappa|\xi|^2t}-\cos(|\xi|t)\mathrm{e}^{-\frac{\delta}{2}|\xi|^2t}\right)\ \ \mbox{and}\  \ \widehat{G}_1(t,|\xi|)=\frac{1}{|\xi|}\sin(|\xi|t)\mathrm{e}^{-\frac{\delta}{2}|\xi|^2t}.
\end{align}
Furthermore, we introduce the auxiliary function
\begin{align}\label{Sym-02}
\widehat{H}_0(t,|\xi|)=\frac{4\hat{\delta}-\kappa\delta^2}{8\kappa}|\xi|t\sin(|\xi|t)\mathrm{e}^{-\frac{\delta}{2}|\xi|^2t}.
\end{align}
\begin{prop}\label{Prop-3.4}
Let $\xi\in\ml{Z}_{\intt}(\varepsilon_0)$ with $0<\varepsilon_0\ll 1$. Then, the auxiliary functions fulfill the following refined estimates in the Fourier space:
\begin{align}\label{Est-01}
	\chi_{\intt}(\xi)|\widehat{J}_0(t,\xi)-\widehat{G}_0(t,|\xi|)\widehat{\psi}_2|&\lesssim \chi_{\intt}(\xi)t^{\frac{1}{2}}\mathrm{e}^{-c|\xi|^2t}|\widehat{\psi}_2|,\\
	\chi_{\intt}(\xi)|\widehat{J}_1(t,\xi)-\widehat{G}_1(t,|\xi|)\widehat{\Psi}_{1,2}|&\lesssim\chi_{\intt}(\xi)\mathrm{e}^{-c|\xi|^2t}(|\widehat{\psi}_1|+|\widehat{\psi}_2|),\label{Est-03}
\end{align}
and
\begin{align}
\chi_{\intt}(\xi)|\widehat{J}_0(t,\xi)-\widehat{G}_0(t,|\xi|)\widehat{\psi}_2-\widehat{H}_0(t,|\xi|)\widehat{\psi}_2|&\lesssim \chi_{\intt}(\xi)\mathrm{e}^{-c|\xi|^2t}|\widehat{\psi}_2|,\label{Est-02}
\end{align}
with the combined data $\widehat{\Psi}_{1,2}:=\widehat{\psi}_1+\frac{2\kappa-\delta}{2}\widehat{\psi}_2$.
\end{prop}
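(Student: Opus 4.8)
The plan is to estimate each difference by replacing the exact quantities $\lambda_1,\lambda_{\mathrm R},\lambda_{\mathrm I}$ from Proposition \ref{Prop-Char-root} with their leading terms $-\kappa|\xi|^2$, $-\frac{\delta}{2}|\xi|^2$, $|\xi|$ (and, where needed for \eqref{Est-02}, the next-order term $\frac{4\hat\delta-\kappa\delta^2}{8\kappa}|\xi|^3$ in $\lambda_{\mathrm I}$), and then to control the error with an elementary but careful bookkeeping of the singular prefactors. The key algebraic observation is that the denominator $2\lambda_{\mathrm R}\lambda_1-\lambda_{\mathrm I}^2-\lambda_{\mathrm R}^2-\lambda_1^2$ equals $-P_0-(\lambda_1-\lambda_{\mathrm R})^2=-|\xi|^2+\ml O(|\xi|^4)$, so $1/(2\lambda_{\mathrm R}\lambda_1-\lambda_{\mathrm I}^2-\lambda_{\mathrm R}^2-\lambda_1^2)=-|\xi|^{-2}(1+\ml O(|\xi|^2))$; this is exactly the source of the $|\xi|^{-2}$ in $\widehat G_0$ and must be split off carefully because the remaining factor is regular.

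The main steps, in order. \textbf{Step 1 (exponential factors).} Write $\mathrm e^{\lambda_1 t}=\mathrm e^{-\kappa|\xi|^2 t}\mathrm e^{(\lambda_1+\kappa|\xi|^2)t}$ and $\mathrm e^{\lambda_{\mathrm R}t}=\mathrm e^{-\frac{\delta}{2}|\xi|^2 t}\mathrm e^{(\lambda_{\mathrm R}+\frac{\delta}{2}|\xi|^2)t}$; since $\lambda_1+\kappa|\xi|^2=\ml O(|\xi|^4)$ and similarly for $\lambda_{\mathrm R}$, on $\ml Z_{\intt}(\varepsilon_0)$ we have $|\mathrm e^{(\lambda_1+\kappa|\xi|^2)t}-1|\lesssim |\xi|^4 t\,\mathrm e^{c|\xi|^4 t}\lesssim |\xi|^2\,t^{1/2}\,\mathrm e^{c'|\xi|^2 t}$ using $|\xi|^4 t\le (|\xi|^2 t)^{1/2}\cdot|\xi|^3$ and absorbing $|\xi|^3\le\varepsilon_0$; the extra $|\xi|^2$ cancels the $|\xi|^{-2}$ singularity, producing the $t^{1/2}$ loss in \eqref{Est-01}. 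The same trick handles the cosine/sine arguments: $|\cos(\lambda_{\mathrm I}t)-\cos(|\xi|t)|\lesssim |\lambda_{\mathrm I}-|\xi||\,t\lesssim |\xi|^3 t$ for \eqref{Est-01}, whereas for \eqref{Est-02} one must keep the cubic term, so $|\lambda_{\mathrm I}-|\xi|-\frac{4\hat\delta-\kappa\delta^2}{8\kappa}|\xi|^3|=\ml O(|\xi|^5)$ and $|\cos(\lambda_{\mathrm I}t)-\cos((|\xi|+\frac{4\hat\delta-\kappa\delta^2}{8\kappa}|\xi|^3)t)|\lesssim |\xi|^5 t$, which after multiplication by $|\xi|^{-2}$ and the Gaussian stays bounded. \textbf{Step 2 (prefactor expansions).} For \eqref{Est-03}, expand the coefficient of $\widehat\psi_1$, namely $-\lambda_{\mathrm I}/(2\lambda_{\mathrm R}\lambda_1-\lambda_{\mathrm I}^2-\lambda_{\mathrm R}^2-\lambda_1^2)=|\xi|^{-1}(1+\ml O(|\xi|^2))$, and the coefficient of $\widehat\psi_2$, namely $-(\lambda_{\mathrm R}-\lambda_1)/(\lambda_{\mathrm I}(\cdots))=\frac{2\kappa-\delta}{2}|\xi|^{-1}(1+\ml O(|\xi|^2))$ (here $\lambda_{\mathrm R}-\lambda_1=\frac{2\kappa-\delta}{2}|\xi|^2+\ml O(|\xi|^4)$ by Proposition \ref{Prop-Char-root}); collecting gives $\widehat G_1(t,|\xi|)\widehat\Psi_{1,2}$ plus an error of size $|\xi|\cdot|\xi|^{-1}\cdot\mathrm e^{-c|\xi|^2t}(|\widehat\psi_1|+|\widehat\psi_2|)$, where one uses $|\sin(\lambda_{\mathrm I}t)|\le 1$ to kill the residual $|\xi|^{-1}$ via the $\ml O(|\xi|^2)$ factor. \textbf{Step 3 (assembling \eqref{Est-02}).} Combine Step 1's refined cosine expansion with the second-order term: the difference $\widehat J_0-\widehat G_0\widehat\psi_2$ decomposes, via $\cos((|\xi|+a|\xi|^3)t)=\cos(|\xi|t)\cos(a|\xi|^3 t)-\sin(|\xi|t)\sin(a|\xi|^3 t)$ with $a=\frac{4\hat\delta-\kappa\delta^2}{8\kappa}$, into the $\widehat H_0$ term (coming from $-\sin(|\xi|t)\cdot a|\xi|^3 t$ after dividing by $|\xi|^2$) plus genuinely $\ml O(1)\cdot\mathrm e^{-c|\xi|^2t}|\widehat\psi_2|$ remainders (the $\cos(a|\xi|^3t)-1=\ml O(|\xi|^6t^2)$ piece, and the prefactor-expansion error, both beaten by $|\xi|^{-2}$ with room to spare).

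The hard part will be \textbf{Step 3}: organizing the cancellations in \eqref{Est-02} so that \emph{every} surviving term is $\ml O(1)$ rather than $\ml O(t^{1/2})$, which forces one to track the cubic corrections to all three roots simultaneously and to use the precise value $\hat\delta=-\kappa^2(\delta-\gamma b\nu)$ so that the $|\xi|^4$ contributions to $\lambda_1$ and $\lambda_{\mathrm R}$ (which enter the exponentials and the denominator) do not leave an uncancelled $|\xi|^2\cdot t$ term after division by $|\xi|^2$. Concretely, the exponential error $\mathrm e^{\hat\delta|\xi|^4 t}-1\sim \hat\delta|\xi|^4 t$ in $\mathrm e^{\lambda_1 t}$, divided by $|\xi|^2$, gives $\hat\delta|\xi|^2 t\,\mathrm e^{-c|\xi|^2 t}=\ml O(1)$ on $\ml Z_{\intt}$ — borderline but admissible — and the analogous term from $\mathrm e^{\lambda_{\mathrm R}t}$ must be seen to combine with it rather than add. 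All other estimates (Steps 1–2, and \eqref{Est-01}, \eqref{Est-03}) are routine once the $|\xi|^{-1}$ and $|\xi|^{-2}$ prefactors are peeled off as described.
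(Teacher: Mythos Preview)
Your approach is essentially the paper's: decompose the difference into a denominator error ($-P_0^{-1}$ versus $|\xi|^{-2}$), exponential errors ($\mathrm e^{\lambda_1 t}$ versus $\mathrm e^{-\kappa|\xi|^2 t}$, etc.), and an oscillation-argument error ($\cos(\lambda_{\mathrm I}t)$ versus $\cos(|\xi|t)$), and observe that only the last one produces the $t^{1/2}$ growth and is exactly corrected by $\widehat H_0$. Two points, however, need fixing.

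First, the inequality $|\xi|^4 t\le (|\xi|^2 t)^{1/2}\cdot|\xi|^3$ in Step~1 is false (it is $|\xi|^4 t^{1/2}$ on the right, i.e.\ $t\le t^{1/2}$). The correct and simpler argument is the one you in fact rediscover at the end of your Step~3: $|\mathrm e^{(\lambda_1+\kappa|\xi|^2)t}-1|\lesssim |\xi|^4 t\,\mathrm e^{c|\xi|^2 t}$, so after dividing by $|\xi|^2$ and multiplying by $\mathrm e^{-\kappa|\xi|^2 t}$ you obtain $|\xi|^2 t\,\mathrm e^{-c|\xi|^2 t}\lesssim 1$. Thus the exponential errors contribute $\ml O(1)$, not merely $\ml O(t^{1/2})$, and the only source of $t^{1/2}$ in \eqref{Est-01} is the cosine-argument term.

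Second, and consequently, your anticipated ``hard part'' in Step~3 is illusory: no cancellation between the $\lambda_1$- and $\lambda_{\mathrm R}$-exponential errors is needed, and the precise numerical value of $\hat\delta$ plays no role in the estimate (only the \emph{order} $\ml O(|\xi|^4)$ matters there). Each of the non-oscillatory error pieces is individually $\ml O(1)\cdot\mathrm e^{-c|\xi|^2 t}|\widehat\psi_2|$, and the sole task for \eqref{Est-02} is to show that the cosine-argument piece minus $\widehat H_0\widehat\psi_2$ is $\ml O(1)$ as well; your addition-formula expansion (or equivalently a second-order Taylor expansion of $\cos(\lambda_{\mathrm I}t)-\cos(|\xi|t)$) does exactly that.
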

\begin{proof} Let us initially recall $P_0$ in \eqref{P0}. Thus, we can do the next decomposition:
	\begin{align*}
	\widehat{J}_0(t,\xi)-\widehat{G}_0(t,|\xi|)\widehat{\psi}_2&=\frac{P_0-|\xi|^2}{P_0|\xi|^2}\cos(\lambda_{\mathrm{I}}t)\mathrm{e}^{\lambda_{\mathrm{R}}t}\widehat{\psi}_2-\frac{1}{|\xi|^2}\big(\cos(\lambda_{\mathrm{I}}t)-\cos(|\xi|t)\big)\mathrm{e}^{\lambda_{\mathrm{R}}t}\widehat{\psi}_2\\
	&\quad-\frac{\cos(|\xi|t)}{|\xi|^2}\left(\mathrm{e}^{\lambda_{\mathrm{R}}t}-\mathrm{e}^{-\frac{\delta}{2}|\xi|^2t}\right)\widehat{\psi}_2+\frac{|\xi|^2-P_0}{P_0|\xi|^2}\mathrm{e}^{\lambda_1t}\widehat{\psi}_2+\frac{1}{|\xi|^2}\left(\mathrm{e}^{\lambda_1t}-\mathrm{e}^{-\kappa|\xi|^2t}\right)\widehat{\psi}_2\\
	&=:I_1+I_2+I_3+I_4+I_5.
	\end{align*}
Because of $P_0-|\xi|^2=(\lambda_{\mathrm{R}}-\lambda_1)^2+(\lambda_{\mathrm{I}}-|\xi|)(\lambda_{\mathrm{I}}+|\xi|)=\ml{O}(|\xi|^4)$ for $\xi\in\ml{Z}_{\intt}(\varepsilon_0)$ and $|\cos(\lambda_{\mathrm{I}}t)|\leqslant 1$, one can derive
\begin{align*}
\chi_{\intt}(\xi)(|I_1|+|I_4|)\lesssim\chi_{\intt}(\xi)\mathrm{e}^{-c|\xi|^2t}|\widehat{\psi}_2|.
\end{align*}
Using the integration form to represent the difference
\begin{align*}
\chi_{\intt}(\xi)\left|\mathrm{e}^{\lambda_{\mathrm{R}}t}-\mathrm{e}^{-\frac{\delta}{2}|\xi|^2t}\right|&\lesssim\chi_{\intt}(\xi)\mathrm{e}^{-\frac{\delta}{2}|\xi|^2t}|\xi|^4t\int_0^1\mathrm{e}^{-\frac{\hat{\delta}}{2}|\xi|^4t\tau+\ml{O}(|\xi|^5)t\tau}\mathrm{d}\tau\notag\\
&\lesssim\chi_{\intt}(\xi)|\xi|^4t\mathrm{e}^{-c|\xi|^2t},
\end{align*}
we deduce immediately
\begin{align*}
\chi_{\intt}(\xi)(|I_3|+|I_5|)\lesssim\chi_{\intt}(\xi)|\xi|^2t\mathrm{e}^{-c|\xi|^2t}|\widehat{\psi}_2|\lesssim\chi_{\intt}(\xi)\mathrm{e}^{-c|\xi|^2t}|\widehat{\psi}_2|.
\end{align*}
Indeed, the worse part occurs in $I_2$ since the oscillation cannot completely compensate the singularity as $|\xi|\to0$ under this situation. According to the differential mean value theorem, there exists $\eta_1$ between $|\xi|$ and $\lambda_{\mathrm{I}}$ such that
\begin{align}\label{Bad-mean}
\cos(\lambda_{\mathrm{I}}t)-\cos(|\xi|t)=-\left(\frac{4\hat{\delta}-\kappa\delta^2}{8\kappa}|\xi|^3t+\ml{O}(|\xi|^5)t\right)\sin(\eta_1t).
\end{align}
Associating with the last equality and $|\sin(\eta_1t)|\leqslant 1$, it yields
\begin{align*}
\chi_{\intt}(\xi)|I_2|\lesssim\chi_{\intt}(\xi)|\xi|t\mathrm{e}^{-c|\xi|^2t}|\widehat{\psi}_2|\lesssim \chi_{\intt}(\xi)t^{\frac{1}{2}}\mathrm{e}^{-c|\xi|^2t}|\widehat{\psi}_2|.
\end{align*}
Therefore, we may claim \eqref{Est-01} by summarizing the previous estimates. By the same way as the above, another estimate \eqref{Est-03} can be proved.

To derive faster decay estimates when one subtracts the second-order profile, we observe from \eqref{Bad-mean} that
\begin{align*}
\cos(\lambda_{\mathrm{I}}t)-\cos(|\xi|t)+\frac{4\hat{\delta}-\kappa\delta^2}{8\kappa}|\xi|^3t\sin(|\xi|t)=t^2\ml{O}(|\xi|^6),
\end{align*}
where we used Taylor's expansion. Consequently, the next estimate holds:
\begin{align*}
&\chi_{\intt}(\xi)\left| I_2-\widehat{H}_0(t,|\xi|)\widehat{\psi}_2\right|\\
&\qquad\lesssim\chi_{\intt}(\xi)\left|I_2-\frac{4\hat{\delta}-\kappa\delta^2}{8\kappa}|\xi|t\sin(|\xi|t)\mathrm{e}^{\lambda_{\mathrm{R}}t}\widehat{\psi}_2\right|+\chi_{\intt}(\xi)\left|\frac{4\hat{\delta}-\kappa\delta^2}{8\kappa}|\xi|t\sin(|\xi|t)\mathrm{e}^{\lambda_{\mathrm{R}}t}-\widehat{H}_0(t,|\xi|) \right||\widehat{\psi}_2|\\
&\qquad\lesssim\chi_{\intt}(\xi)|\xi|^4(1+|\xi|)t^2\mathrm{e}^{-c|\xi|^2t}|\widehat{\psi}_2|\\
&\qquad\lesssim\chi_{\intt}(\xi)\mathrm{e}^{-c|\xi|^2t}|\widehat{\psi}_2|,
\end{align*}
which completes the proof of \eqref{Est-02}. 
\end{proof}

\section{Asymptotic behaviors of solution}\label{Section-4}
The schedule of this section is arranged by: in Subsection \ref{Subsec-optimal-solution}, we will prove the optimal estimate \eqref{Opt-Est} of the solution in the $L^2$ norm; in Subsection \ref{Subsec-optimal-profile}, we will demonstrate the optimal estimates \eqref{Opt-Prof}-\eqref{eq:12} of the solution subtracting the first-order profile, i.e. the first-order error term, where last data is taken from energy space with additional $L^{1,1}$ regularity. As a by-product, some estimates for the second-order profile of solution \eqref{Upper-second} will be derived in Subsection \ref{Sub-section-second-order}.

Before implement our schedule, let us recall three optimal estimates (that is, same behaviors of the upper and lower bounds) in the next lemma, whose proofs are based on WKB method as well as refined time-frequencies analysis. 
\begin{lemma}\label{Lemma-Optimal-Est}
Let $n\geqslant 1$. Then, the following optimal estimates for the multipliers hold:
\begin{align*}
\left\|\chi_{\intt}(\xi)\mathrm{e}^{-c|\xi|^2t}\right\|_{L^2}&\asymp t^{-\frac{n}{4}},\\
 \left\|\chi_{\intt}(\xi)|\xi|^{-1}|\sin(|\xi|t)|\mathrm{e}^{-c|\xi|^2t}\right\|_{L^2}&\asymp \ml{D}_n^{(1)}(t),\\
 \left\|\chi_{\intt}(\xi)|\xi|^{-2}|\sin(|\xi|t)|^2\mathrm{e}^{-c|\xi|^2t}\right\|_{L^2}&\asymp \ml{D}_n^{(2)}(t),
\end{align*}
with some  constants $c>0$ for any $t\gg1$, where the time-dependent functions $\ml{D}_n^{(1)}(t)$ and $\ml{D}_n^{(2)}(t)$ were introduced in Subsection \ref{Sub-sec-notation}.
\end{lemma}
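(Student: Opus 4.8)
The plan is to reduce all three statements to one–dimensional radial integrals and then to run an elementary, dimension–sensitive case analysis near the origin. Since $\chi_{\intt}$ is smooth, valued in $[0,1]$, equal to $1$ on $\ml Z_{\intt}(\varepsilon_0/2)$ and supported in $\ml Z_{\intt}(\varepsilon_0)$, we have $\mathbf 1_{\{|\xi|\leqslant\varepsilon_0/2\}}\leqslant\chi_{\intt}(\xi)\leqslant\mathbf 1_{\{|\xi|\leqslant\varepsilon_0\}}$, so by passing to polar coordinates it suffices, for a fixed radius $R$ (take $R=\varepsilon_0$ for the upper and $R=\varepsilon_0/2$ for the lower bounds), to establish
\begin{align*}
I_0(t)&:=\int_0^R r^{n-1}\mathrm{e}^{-2cr^2t}\,\mathrm{d}r\asymp t^{-\frac n2},\\
I_1(t)&:=\int_0^R r^{n-3}\sin^2(rt)\,\mathrm{e}^{-2cr^2t}\,\mathrm{d}r\asymp\big(\ml D_n^{(1)}(t)\big)^2,\\
I_2(t)&:=\int_0^R r^{n-5}\sin^4(rt)\,\mathrm{e}^{-2cr^2t}\,\mathrm{d}r\asymp\big(\ml D_n^{(2)}(t)\big)^2 .
\end{align*}
The estimate for $I_0$ is immediate: the scaling $r=s/\sqrt t$ turns it into $t^{-n/2}\int_0^{R\sqrt t}s^{n-1}\mathrm{e}^{-2cs^2}\,\mathrm{d}s$, and the remaining integral lies between two fixed positive constants for $t\gg1$.

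For $I_1$ and $I_2$ I would split the radial interval at $r=1/t$. On $[0,1/t]$ the exponential is $\asymp1$ and, by $\tfrac2\pi x\leqslant\sin x\leqslant x$ on $[0,1]$, $\sin^2(rt)\asymp(rt)^2$ and $\sin^4(rt)\asymp(rt)^4$, so the inner pieces are $\asymp t^2\int_0^{1/t}r^{n-1}\,\mathrm{d}r\asymp t^{2-n}$ for $I_1$ and $\asymp t^{4-n}$ for $I_2$. On $[1/t,R]$, bounding $\sin^2,\sin^4\leqslant1$ reduces the upper bound to $\int_{1/t}^R r^{m}\mathrm{e}^{-2cr^2t}\,\mathrm{d}r$ with $m:=n-3$ for $I_1$ and $m:=n-5$ for $I_2$, and three regimes occur. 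If $m\leqslant-2$ (i.e.\ $n=1$ for $I_1$; $n\in\{1,2,3\}$ for $I_2$) this integral is controlled by its lower endpoint, hence $\asymp t^{-(m+1)}$, which matches the inner piece, while the $[0,1/t]$ contribution already furnishes a matching lower bound; this gives $I_1\asymp t^{2-n}=(\ml D_n^{(1)}(t))^2$ and $I_2\asymp t^{4-n}=(\ml D_n^{(2)}(t))^2$. If $m\geqslant0$ (i.e.\ $n\geqslant3$ for $I_1$; $n\geqslant5$ for $I_2$) the scaling $r=s/\sqrt t$ yields $\int_{1/t}^R r^m\mathrm{e}^{-2cr^2t}\,\mathrm{d}r\leqslant t^{-(m+1)/2}\int_0^\infty s^m\mathrm{e}^{-2cs^2}\,\mathrm{d}s\asymp t^{-(m+1)/2}$, which equals $(\ml D_n^{(1)}(t))^2$, respectively $(\ml D_n^{(2)}(t))^2$; the matching lower bound comes from restricting the integral to the single annulus $r\in[t^{-1/2},2t^{-1/2}]$, where $\mathrm{e}^{-2cr^2t}\gtrsim1$, $r^m\asymp t^{-m/2}$, and $\int_{t^{-1/2}}^{2t^{-1/2}}\sin^2(rt)\,\mathrm{d}r=\tfrac1t\int_{\sqrt t}^{2\sqrt t}\sin^2u\,\mathrm{d}u\gtrsim t^{-1/2}$ (and likewise with $\sin^4$), because the antiderivatives of $\sin^2$ and $\sin^4$ have positive linear leading terms and bounded oscillatory remainders.

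The remaining regime $m=-1$, i.e.\ $n=2$ for $I_1$ and $n=4$ for $I_2$, is the genuinely delicate one, and I expect it to be the main obstacle. The upper bound $\int_{1/t}^R r^{-1}\mathrm{e}^{-2cr^2t}\,\mathrm{d}r\leqslant\ln(Rt)\lesssim\ln t$ is trivial, but the matching lower bound requires extracting the logarithm from the bulk, where the crude bound $\sin^2,\sin^4\leqslant1$ is too lossy and the oscillation of the sine must be exploited. I would integrate over $[1/t,t^{-1/2}]$, on which $\mathrm{e}^{-2cr^2t}\gtrsim1$, and use $\sin^2x=\tfrac12(1-\cos2x)$ and $\sin^4x=\tfrac18(3-4\cos2x+\cos4x)$: the constant part contributes $\gtrsim\int_{1/t}^{t^{-1/2}}r^{-1}\,\mathrm{d}r\asymp\ln t$, while each oscillatory part, after a linear change of variables, reduces to $\int_{a}^{b}u^{-1}\cos u\,\mathrm{d}u$ with $a$ fixed and $b\asymp\sqrt t$, which stays bounded uniformly in $t$ since $\int_a^\infty u^{-1}\cos u\,\mathrm{d}u$ converges. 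This yields $I_1\gtrsim\ln t=(\ml D_2^{(1)}(t))^2$ and $I_2\gtrsim\ln t=(\ml D_4^{(2)}(t))^2$. Assembling the three regimes for $I_1$ and $I_2$ together with $I_0\asymp t^{-n/2}$ proves the asserted equivalences; the only place where genuine care is needed is the logarithmic critical dimensions $n=2$ and $n=4$, where the sine's oscillation has to be used to pin down both the upper and the lower bound.
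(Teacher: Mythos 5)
Your argument is correct, and every step checks out: the reduction to radial integrals via $\mathbf 1_{\{|\xi|\leqslant\varepsilon_0/2\}}\leqslant\chi_{\intt}\leqslant\mathbf 1_{\{|\xi|\leqslant\varepsilon_0\}}$, the splitting at $r=1/t$ with $\sin(rt)\asymp rt$ on the inner piece, the annulus $[t^{-1/2},2t^{-1/2}]$ for the lower bound when $m\geqslant 0$, and the extraction of the logarithm in the critical dimensions $n=2$ (for the second norm) and $n=4$ (for the third) via $\sin^2x=\tfrac12(1-\cos 2x)$, $\sin^4x=\tfrac18(3-4\cos2x+\cos4x)$ and the uniform boundedness of $\int_a^{b}u^{-1}\cos u\,\mathrm{d}u$. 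The exponents all match: $t^{2-n}$, $t^{1-n/2}$ and $\ln t$ for the second norm squared, and $t^{4-n}$, $t^{2-n/2}$ and $\ln t$ for the third, which are exactly $(\ml D_n^{(1)}(t))^2$ and $(\ml D_n^{(2)}(t))^2$. The comparison with the paper is therefore not one of competing proofs: the paper gives no proof at all, instead outsourcing the first two equivalences to Ikehata and Ikehata--Onodera, the upper bound of the third to Chen--Ikehata, and its sharp lower bound to Chen--Ikehata--Palmieri. Your write-up assembles all six bounds into one uniform, elementary case analysis on a single radial integral $\int_0^R r^{m}\,|\sin(rt)|^{2k}\mathrm{e}^{-2cr^2t}\,\mathrm{d}r$, which is essentially the technique those references use individually (splitting of the low-frequency region, half-angle identities, decay of the oscillatory integral) but has the advantage of being self-contained and of making transparent why $n=2$ and $n=4$ are the critical dimensions. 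The only cosmetic caveat is that in the regime $m\leqslant-2$ you only need the upper bound $\int_{1/t}^Rr^m\mathrm{e}^{-2cr^2t}\,\mathrm{d}r\lesssim t^{-(m+1)}$, since the matching lower bound already comes from $[0,1/t]$; stating it as $\asymp$ is harmless but slightly more than is used.
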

Concerning Lemma \ref{Lemma-Optimal-Est}, the first two estimates have been proved by \cite{Ikehata=2014,Ikehata-Onodera=2017}. The upper bound estimate in the third one has been derived by \cite{Chen-Ikehata=2021}, and the sharp lower bound estimate has been obtained by \cite{Chen-Ikehata-Palmieri=2023}. We emphasize that the stronger singularity $|\xi|^{-2}$ as $|\xi|\to0$ appears in the third estimate so that the proof of sharp lower bound is delicate. Two direct consequences from Lemma \ref{Lemma-Optimal-Est} and the proof of Proposition \ref{Prop-Zero-Order} are
\begin{align}\label{Sup_01}
	\|\chi_{\intt}(\xi)|\xi|\widehat{G}_0(t,|\xi|)\|_{L^2}\asymp\ml{D}_n^{(1)}(t)\ \  \mbox{as well as} \ \ \|\chi_{\intt}(\xi)\widehat{G}_0(t,|\xi|)\|_{L^2}\asymp\ml{D}_n^{(2)}(t) 
\end{align}
for $t\gg1$ and any $n\geqslant 1$. In the above statements, we have treated the singular multiplier in the next way:
\begin{align*}
	|\xi|\widehat{G}_0(t,|\xi|)&=\frac{1}{|\xi|}\cos(|\xi|t)\left(\mathrm{e}^{-\kappa|\xi|^2t}-\mathrm{e}^{-\frac{\delta}{2}|\xi|^2t}\right)+\frac{1}{|\xi|}\big(1-\cos(|\xi|t)\big)\mathrm{e}^{-\kappa|\xi|^2t}\\
	&=\frac{\delta-2\kappa}{2}|\xi|t\cos(|\xi|t)\mathrm{e}^{-\frac{\delta}{2}|\xi|^2t}\int_0^1\mathrm{e}^{\frac{\delta-2\kappa}{2}|\xi|^2t\tau}\mathrm{d}\tau+\frac{2\sin^2(\frac{|\xi|}{2}t)}{|\xi|}\mathrm{e}^{-\kappa|\xi|^2t}.
\end{align*}

\subsection{Optimal estimate for the solution}\label{Subsec-optimal-solution}
Applying Proposition \ref{Prop-Zero-Order} into the solution formula \eqref{Representation_Fourier_u}, the next estimate holds:
\begin{align*}
	\chi_{\intt}(\xi)|\widehat{\psi}(t,\xi)|&\lesssim\chi_{\intt}(\xi)\mathrm{e}^{-c|\xi|^2t}|\widehat{\psi}_0|+\chi_{\intt}(\xi)\left(1+\frac{|\sin(|\xi|t)|}{|\xi|}\right)\mathrm{e}^{-c|\xi|^2t}|\widehat{\psi}_1|\\
	&\quad+\chi_{\intt}(\xi)\left(t+\frac{|\sin(|\xi|t)|}{|\xi|}+\frac{|\sin(\frac{|\xi|}{2}t)|^2}{|\xi|^2}\right)\mathrm{e}^{-c|\xi|^2t}|\widehat{\psi}_2|.
\end{align*}
%Moreover, \cite[Lemma 3.1]{Ikehata=2004} tells us that
%\begin{align*}
%|\widehat{\psi}_j|\lesssim|\xi|\,\|\psi_j\|_{L^{1,1}}+|P_{\psi_j}|,
%\end{align*}
%where these notations were introduced in Subsection \ref{Sub-sec-notation}. 
With the help of Lemma \ref{Lemma-Optimal-Est}, we may derive
\begin{align*}
\|\chi_{\intt}(\xi)\widehat{\psi}(t,\xi)\|_{L^2}\lesssim\ml{D}_n^{(1)}(t)\sum\limits_{j=0,1}\|\psi_j\|_{L^{1}}+\ml{D}_n^{(2)}(t)\|\psi_2\|_{L^{1}},
\end{align*}
in which $\ml{D}_n^{(2)}(t)$ plays a dominant role on the right-hand side for $t\gg1$. Concerning bounded and large frequencies from \eqref{Estimate-large-middle}, an exponential decay estimate holds
\begin{align*}
\left\|\big(1-\chi_{\intt}(\xi)\big)\widehat{\psi}(t,\xi)\right\|_{L^2}\lesssim\mathrm{e}^{-ct}\sum\limits_{j=0,1,2}\|\psi_j\|_{H^{-2j}}.
\end{align*}
As a consequence, the Plancherel theorem associated with the last two derived estimates shows the upper bound estimate of \eqref{Opt-Est} for all $n\geqslant 1$.

In order to arrive at the sharp lower bound estimates, we propose the next bridge.
\begin{prop}\label{Prop-4.1}
Let $n\geqslant 1$. Then, the solution to the Cauchy problem \eqref{Eq-Blackstock} fulfills the following refined estimate:
\begin{align*}
\left\| \chi_{\intt}(D)\big(\psi(t,\cdot)-G_0(t,|D|)\psi_2\big) \right\|_{L^2}\lesssim t^{-\frac{n}{4}} \|\psi_0 \|_{L^{1}}+\ml{D}_n^{(1)}(t)\sum\limits_{j=1,2}\|\psi_j \|_{L^{1}},
\end{align*}
and the further one:
\begin{align*}
\left\|\chi_{\intt}(D)\big(\psi(t,\cdot)-G_0(t,|D|)\psi_2-G_1(t,|D|)\Psi_{1,2}-H_0(t,|D|)\psi_2\big)\right\|_{L^2}\lesssim t^{-\frac{n}{4}}\sum\limits_{j=0,1,2} \|\psi_j \|_{L^{1}},
\end{align*}
for $t\gg1$ with the combined data $\Psi_{1,2}:=\psi_1+\frac{2\kappa-\delta}{2}\psi_2$, where the symbols of differential operators $G_j(t,|D|)$ and $H_0(t,|D|)$ were defined in \eqref{Sym-01} and \eqref{Sym-02}, respectively.
\end{prop}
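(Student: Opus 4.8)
The plan is to assemble the proposition directly from the Fourier-side estimates of Section~\ref{Section-3} together with the optimal multiplier bounds of Lemma~\ref{Lemma-Optimal-Est}. Since the cut-off $\chi_{\intt}(D)$ localizes everything to $\ml{Z}_{\intt}(\varepsilon_0)$, Plancherel's theorem converts each $L^2$ norm on the physical side into the $L^2$ norm, over the support of $\chi_{\intt}$, of the corresponding Fourier multiplier applied to the data; the elementary bound $|\widehat{\psi}_j(\xi)|\leqslant\|\psi_j\|_{L^1}$ then lets us factor out the $L^1$ norms. Hence the whole task reduces to (i) a pointwise-in-$\xi$ bound for the relevant difference of Fourier transforms, obtained by the triangle inequality from Propositions~\ref{Prop-3.3}--\ref{Prop-3.4}, and (ii) reading off $\|\chi_{\intt}(\xi)m(t,\xi)\|_{L^2}$ for the few multipliers $m$ that remain, which is exactly what Lemma~\ref{Lemma-Optimal-Est} provides.

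For the first estimate I would combine the first inequality of Proposition~\ref{Prop-3.3} with the bound \eqref{Est-01} of Proposition~\ref{Prop-3.4}: writing $\widehat{\psi}-\widehat{G}_0\widehat{\psi}_2=(\widehat{\psi}-\widehat{J}_0)+(\widehat{J}_0-\widehat{G}_0\widehat{\psi}_2)$ and applying the triangle inequality gives
\[
\chi_{\intt}(\xi)\big|\widehat{\psi}(t,\xi)-\widehat{G}_0(t,|\xi|)\widehat{\psi}_2\big|\lesssim\chi_{\intt}(\xi)\mathrm{e}^{-c|\xi|^2t}\Big(|\widehat{\psi}_0|+\tfrac{|\sin(|\xi|t)|}{|\xi|}\big(|\widehat{\psi}_1|+|\widehat{\psi}_2|\big)+t^{\frac12}|\widehat{\psi}_2|\Big).
\]
Taking $L^2$ norms, bounding $|\widehat{\psi}_j|\leqslant\|\psi_j\|_{L^1}$ and invoking the first two lines of Lemma~\ref{Lemma-Optimal-Est}, the three contributions are $t^{-n/4}\|\psi_0\|_{L^1}$, $\ml{D}_n^{(1)}(t)(\|\psi_1\|_{L^1}+\|\psi_2\|_{L^1})$ and $t^{\frac12}\,t^{-n/4}\|\psi_2\|_{L^1}$. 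Since $t^{\frac12-\frac n4}\lesssim\ml{D}_n^{(1)}(t)$ for every $n\geqslant1$ and $t\gg1$ (with equality for $n\geqslant3$, and $t^{1/4}\lesssim t^{1/2}$, $1\lesssim(\ln t)^{1/2}$ in dimensions $n=1,2$), the last term is absorbed into the second and the first displayed bound of the proposition follows.

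For the second estimate I would start instead from the second inequality of Proposition~\ref{Prop-3.3}, $\chi_{\intt}(\xi)|\widehat{\psi}-\widehat{J}_0-\widehat{J}_1|\lesssim\chi_{\intt}(\xi)\mathrm{e}^{-c|\xi|^2t}(|\widehat{\psi}_0|+|\widehat{\psi}_1|)$, and then peel off the two refined profiles by means of the sharper estimates \eqref{Est-02} and \eqref{Est-03}, which replace $\widehat{J}_0$ by $\widehat{G}_0(t,|\xi|)\widehat{\psi}_2+\widehat{H}_0(t,|\xi|)\widehat{\psi}_2$ and $\widehat{J}_1$ by $\widehat{G}_1(t,|\xi|)\widehat{\Psi}_{1,2}$, each up to an error of size $\chi_{\intt}(\xi)\mathrm{e}^{-c|\xi|^2t}(|\widehat{\psi}_1|+|\widehat{\psi}_2|)$. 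Adding the three contributions (and recalling that $\widehat{G}_1(t,|\xi|)\widehat{\Psi}_{1,2}$ and $\widehat{H}_0(t,|\xi|)\widehat{\psi}_2$ are the Fourier transforms of $G_1(t,|D|)\Psi_{1,2}$ and $H_0(t,|D|)\psi_2$) yields
\[
\chi_{\intt}(\xi)\big|\widehat{\psi}-\widehat{G}_0\widehat{\psi}_2-\widehat{G}_1\widehat{\Psi}_{1,2}-\widehat{H}_0\widehat{\psi}_2\big|\lesssim\chi_{\intt}(\xi)\mathrm{e}^{-c|\xi|^2t}\sum_{j=0,1,2}|\widehat{\psi}_j|;
\]
taking $L^2$ norms, bounding $|\widehat{\psi}_j|\leqslant\|\psi_j\|_{L^1}$ and using the first line of Lemma~\ref{Lemma-Optimal-Est} produces the factor $t^{-n/4}$ and closes the argument.

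Since the genuinely hard inputs---the third-order expansion of the characteristic roots, the algebraic recasting of $\widehat{K}_2$ that recovers the $|\sin(|\xi|t)|$ factor, and the sharp $L^2$ bounds for the multiplier carrying the $|\xi|^{-2}$ singularity---are already recorded in Propositions~\ref{Prop-Char-root}--\ref{Prop-3.4} and Lemma~\ref{Lemma-Optimal-Est}, the only delicate point left here is the $t^{1/2}$ loss in \eqref{Est-01}: one must check that, after integration against $\mathrm{e}^{-c|\xi|^2t}$ over $\ml{Z}_{\intt}(\varepsilon_0)$, this loss is still controlled by $\ml{D}_n^{(1)}(t)$, which is precisely the borderline situation in dimensions $n=1,2$ that motivates the $t^{1/2}$ and $(\ln t)^{1/2}$ branches in the definition of $\ml{D}_n^{(1)}(t)$.
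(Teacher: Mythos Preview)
Your proposal is correct and follows essentially the same route as the paper: split $\widehat{\psi}-\widehat{G}_0\widehat{\psi}_2$ (respectively $\widehat{\psi}-\widehat{G}_0\widehat{\psi}_2-\widehat{G}_1\widehat{\Psi}_{1,2}-\widehat{H}_0\widehat{\psi}_2$) through the intermediate functions $\widehat{J}_0$, $\widehat{J}_1$ via the triangle inequality, apply Propositions~\ref{Prop-3.3}--\ref{Prop-3.4}, bound $|\widehat{\psi}_j|\leqslant\|\psi_j\|_{L^1}$, and read off the $L^2$ norms of the resulting multipliers from Lemma~\ref{Lemma-Optimal-Est}. Your observation that the $t^{1/2}$ loss from \eqref{Est-01} is absorbed by $\ml{D}_n^{(1)}(t)$ in every dimension is exactly the check the paper performs implicitly.
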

\begin{proof}
The application of triangle inequality for Propositions \ref{Prop-3.3} as well as \ref{Prop-3.4} implies
\begin{align*}
\chi_{\intt}(\xi)|\widehat{\psi}(t,\xi)-\widehat{G}_0(t,|\xi|)\widehat{\psi}_2|&\lesssim\chi_{\intt}(\xi)|\widehat{\psi}(t,\xi)-\widehat{J}_0(t,\xi)|+\chi_{\intt}(\xi)|\widehat{J}_0(t,\xi)-\widehat{G}_0(t,|\xi|)\widehat{\psi}_2|\\
&\lesssim\chi_{\intt}(\xi)\mathrm{e}^{-c|\xi|^2t}\left(|\widehat{\psi}_0|+\frac{|\sin(|\xi|t)|}{|\xi|}|\widehat{\psi}_1|+\left(t^{\frac{1}{2}}+\frac{|\sin(|\xi|t)|}{|\xi|}\right)|\widehat{\psi}_2|\right).
\end{align*}
Hence, for $t\gg1$ we claim
\begin{align*}
\left\|\chi_{\intt}(\xi) \big( \widehat{\psi}(t,\xi)-\widehat{G}_0(t,|\xi|)\widehat{\psi}_2 \big)\right\|_{L^2}\lesssim t^{-\frac{n}{4}} \|\psi_0 \|_{L^{1}}+\ml{D}_n^{(1)}(t)\sum\limits_{j=1,2} \|\psi_j \|_{L^{1}},
\end{align*}
where we employed Lemma \ref{Lemma-Optimal-Est}. Following the same procedure as the proof of \eqref{Opt-Est} from the upper bound's viewpoint, we complete the first aim. For another, we also can get
\begin{align*}
	\chi_{\intt}(\xi)|\widehat{\psi}(t,\xi)-\widehat{G}_0(t,|\xi|)\widehat{\psi}_2-\widehat{G}_1(t,|\xi|)\widehat{\Psi}_{1,2}-\widehat{H}_0(t,|\xi|)\widehat{\psi}_2|\lesssim\chi_{\intt}(\xi)\mathrm{e}^{-c|\xi|^2t}(|\widehat{\psi}_0|+|\widehat{\psi}_1|+|\widehat{\psi}_2|).
\end{align*}
Then, repeating the same approach as before, we conclude our second desired estimate.
\end{proof}

Later, we will use the following chain several times:
\begin{align*}
	\chi_{\extt}(\xi)|\xi|^{k_0}\mathrm{e}^{-c|\xi|^2t}&=\chi_{\extt}(\xi)|\xi|^{-k_1}(|\xi|^2t)^{\frac{k_0+k_1}{2}}t^{-\frac{k_0+k_1}{2}}\mathrm{e}^{-c|\xi|^2t}\\
	&\lesssim\chi_{\extt}(\xi)|\xi|^{-k_1}t^{-\frac{k_0+k_1}{2}}\mathrm{e}^{-2ct}\\
	&\lesssim\chi_{\extt}(\xi)|\xi|^{-k_1}\mathrm{e}^{-ct}
\end{align*}
for $t\gg1$, where $k_0,k_1$ are positive constants.
%Recalling the setting $\ml{G}_0(t,x)=\ml{F}^{-1}_{\xi\to x}(\widehat{G}_0(t,|\xi|))$, and 
Taking the notations
\begin{align*}
	F_1(t,x)&:=\psi(t,x)-G_0(t,|D|)\psi_2, \\
	F_2(t,x)&:=G_0(t,|D|)\psi_2-G_0(t,x)P_{\psi_2},
\end{align*}
we have 
\begin{align*}
	\|F_1(t,\cdot)\|_{L^2} & \lesssim \left\|\chi_{\intt}(\xi) \big( \widehat{\psi}(t,\xi)-\widehat{G}_0(t,|\xi|)\widehat{\psi}_2 \big) \right\|_{L^2} + \left\|\big(1-\chi_{\intt}(\xi)\big) \widehat{\psi}(t,\xi) \right\|_{L^2}\\
	&\quad+ \left\|\big(1-\chi_{\intt}(\xi)\big) \widehat{G}_{0}(t,|\xi|)\widehat{\psi}_2 \right\|_{L^2}\\
	& \lesssim \ml{D}_n^{(1)}(t)\sum\limits_{j=0,1,2} \|\psi_j \|_{ H^{-2j}\cap L^{1}}.
\end{align*}
On the other hand, we able able to also obtain 
\begin{align*}
	\|F_2(t,\cdot)\|_{L^2} =o\big(\ml{D}_n^{(2)}(t)\big)
\end{align*}
as $t \to \infty$, just following the estimate of $E_2(t,x)$ below (see the derivation of \eqref{eq:26} indeed).
As a consequence, we conclude the desired estimate \eqref{eq:6} by 
\begin{align*}
	\|\psi(t,\cdot)-\psi^{(1)}(t,\cdot)\|_{L^2} \lesssim \|F_1(t,\cdot)\|_{L^2} +\|F_2(t,\cdot)\|_{L^2}  = o\big( \ml{D}_n^{(2)}(t)\big)
\end{align*} 
for large $t \gg 1$.

Eventually, since \eqref{Sup_01} and \eqref{eq:6} for large-time, Minkowski's inequality results
\begin{align*}
	\|\psi(t,\cdot)\|_{L^2}&\geqslant -\|\psi(t,\cdot)- \psi^{(1)}(t,\cdot) \|_{L^2} + \|\psi^{(1)}(t,\cdot) \|_{L^2} \\
	&\gtrsim\ml{D}_n^{(2)}(t)|P_{\psi_2}|-o\big(\ml{D}_n^{(2)}(t)\big) \\
	&\gtrsim\ml{D}_n^{(2)}(t)|P_{\psi_2}|
\end{align*}
for $t\gg1$ due to $|P_{\psi_2}|\not\equiv0$ and $\psi_j\in L^{1}$ for $j=0,1,2$. It gives immediately our desired result.
\subsection{Error estimates for second-order profile of the solution}\label{Sub-section-second-order}
Our purpose in this part is to prove \eqref{Upper-second} scrupulously. The  insightful decomposition manifests
\begin{align*}
\psi(t,x)-\psi^{(1)}(t,x)-\psi^{(2)}(t,x)=\sum\limits_{j=1,\dots,4}E_j(t,x),
\end{align*}
where all components are determined as follows:
\begin{align*}
	E_1(t,x)&:=\psi(t,x)-G_0(t,|D|)\psi_2-H_0(t,|D|)\psi_2-G_1(t,|D|)\Psi_{1,2},\\
	E_2(t,x)&:=G_0(t,|D|)\psi_2-G_0(t,x)P_{\psi_2}-\nabla G_0(t,x)\circ M_{\psi_2},\\
	E_3(t,x)&:=H_0(t,|D|)\psi_2-H_0(t,x)P_{\psi_2},\\
	E_4(t,x)&:=G_1(t,|D|)\Psi_{1,2}-G_1(t,x)P_{\Psi_{1,2}}.
\end{align*}
The first one can be easily estimated according to Proposition \ref{Prop-4.1} such that
\begin{align*}
	\|\chi_{\intt}(D)E_1(t,\cdot)\|_{L^2}\lesssim t^{-\frac{n}{4}}\sum\limits_{j=0,1,2} \|\psi_j\|_{L^{1}}.
\end{align*}
It is also easily seen that 
\begin{align*}
	\left\|\big(1-\chi_{\intt}(D)\big) E_1(t,\cdot)\right\|_{L^2}\lesssim \mathrm{e}^{-c t}\sum\limits_{j=0,1,2} \|\psi_j\|_{H^{-2j}}
\end{align*}
for $t \gg 1$.

At this moment, we may realize that the worst component is $E_2(t,x)$ exactly, which can be deeply separated by two parts as follows:
\begin{align*}
	E_{2,1}(t,x)&:=\int_{|y|\leqslant t^{\alpha}}\big(G_0(t,x-y)-G_0(t,x)-y\circ\nabla G_0(t,x)\big)\psi_2(y)\mathrm{d}y,\\
	E_{2,2}(t,x)&:=\int_{|y|\geqslant t^{\alpha}}\big(G_0(t,x-y)-G_0(t,x)\big)\psi_2(y)\mathrm{d}y-\int_{|y|\geqslant t^{\alpha}}y\circ\nabla G_0(t,x)\psi_2(y)\mathrm{d}y,
\end{align*}
with a sufficiently small constant $\alpha>0$. Indeed, Taylor's expansions give
\begin{align*}
|G_0(t,x-y)-G_0(t,x)|&\lesssim|y|\,|\nabla G_0(t,x-\theta_0 y)|,\\
|G_0(t,x-y)-G_0(t,x)-y\circ\nabla G_0(t,x)|&\lesssim|y|^2\,|\nabla^2 G_0(t,x-\theta_1 y)|,
\end{align*}
equipping some constants $\theta_0,\theta_1\in(0,1)$. For one thing, we know
\begin{align*}
\|E_{2,1}(t,\cdot)\|_{L^2}&\lesssim t^{2\alpha}\left\||\xi|^2\widehat{G}_0(t,|\xi|)\right\|_{L^2} \| \psi_2 \|_{L^{1}}\lesssim t^{2\alpha-\frac{n}{4}} \| \psi_2 \|_{L^{1}}.
\end{align*}
For another, since $\psi_2\in L^{1,1}$, it holds
\begin{align*}
	\lim\limits_{t\to\infty}\int_{|y|\geqslant t^{\alpha}}|y|\,|\psi_2(y)|\mathrm{d}y=0.
\end{align*}
In other words, 
%there exists a time-dependent function $\ml{L}_0(t)$ such that $\ml{L}_0(t)\to0$ as $t\to\infty$, and
we have
\begin{align*}
	\|E_{2,2}(t,\cdot)\|_{L^2}&\lesssim\||\xi|\widehat{G}_0(t,|\xi|)\|_{L^2}\int_{|y|\geqslant t^{\alpha}}|y|\,|\psi_2(y)|\mathrm{d}y =o\big(\ml{D}_n^{(1)}(t) \big)
\end{align*}
for $t\gg1$. By choosing $\alpha>0$ to be a small number, we summarize the obtained estimates to assert
\begin{align} \label{eq:26}
	\|E_2(t,\cdot)\|_{L^2}  =o\big(\ml{D}_n^{(1)}(t) \big)
\end{align}
for any $n\geqslant 1$ as well as $t\gg1$.
With the same procedure as \eqref{eq:26}, it holds
\begin{align*}
	\|E_3(t,\cdot)\|_{L^2}+\|E_4(t,\cdot)\|_{L^2}=o\big(\ml{D}_n^{(1)}(t) \big) 
\end{align*}
for $t\gg1$, under the assumption on $\psi_j\in H^{-2j}\cap L^{1}$ for $j=1,2$.
%In conclusion, we have already derived
%\begin{align*}
%\sum\limits_{j=1,\dots,4}\|\chi_{\intt}(D)E_j(t,\cdot)\|_{L^2}\lesssim \ml{L}_0(t)\ml{D}_n^{(1)}(t)\sum\limits_{j=0,1,2}\left(\|\psi_j\|_{L^{1,1}}+|P_{\psi_j}|\right).
%\end{align*}
%
%
Finally, summing up the above estimates, we now can complete the proof of \eqref{Upper-second}.

\subsection{Optimal estimate for the leading term}\label{Subsec-optimal-profile}
We begin with the estimate for
\begin{align*}
	\|\psi(t,\cdot)-\psi^{(1)}(t,\cdot)\|_{L^2}
\end{align*}
 under additional $L^{1,1}$ regularity data. Denoting 
\begin{align*}
A_{\psi_2}(\xi):=\int_{\mb{R}^n}\psi_2(x)\big(\cos(x\cdot\xi)-1\big)\mathrm{d}x\ \ \mbox{and}\ \ B_{\psi_2}(\xi):=\int_{\mb{R}^n}\psi_2(x)\sin(x\cdot\xi)\mathrm{d}x,
\end{align*}
so that $\widehat{\psi}_2-P_{\psi_2}=A_{\psi_2}(\xi)-i B_{\psi_2}(\xi)$, one claims
\begin{align}\label{Est-04}
\left\|\chi_{\intt}(D)\big(G_0(t,|D|)\psi_2-G_0(t,\cdot)P_{\psi_2}\big)\right\|_{L^2}&\lesssim\left\|\chi_{\intt}(\xi)\widehat{G}_0(t,|\xi|)\big(A_{\psi_2}(\xi)-iB_{\psi_2}(\xi)\big)\right\|_{L^2}\notag\\
&\lesssim\|\chi_{\intt}(\xi)|\xi|\widehat{G}_0(t,|\xi|)\|_{L^2}\|\psi_2\|_{L^{1,1}}\notag\\
&\lesssim\ml{D}_n^{(1)}(t)\|\psi_2\|_{L^{1,1}}
\end{align}
for $t\gg1$, where we used \cite[Lemma 2.2]{Ikehata=2014} indicating
\begin{align*}
	|A_{\psi_2}(\xi)|+	|B_{\psi_2}(\xi)|\lesssim|\xi|\,\|\psi_2\|_{L^{1,1}}.
\end{align*}
The combination of Proposition \ref{Prop-4.1} as well as \eqref{Est-04} immediately shows
\begin{align*}
\left\| \chi_{\intt}(D)\big(\psi(t,\cdot)-G_0(t,\cdot)P_{\psi_2}\big) \right\|_{L^2}\lesssim\ml{D}_n^{(1)}(t) \left( \sum\limits_{j=0,1}\|\psi_j\|_{L^{1}} + \|\psi_2 \|_{L^{1,1}} \right).
\end{align*}
Associated with the next control:
\begin{align*}
\left\| \big(1-\chi_{\intt}(D)\big)\big(\psi(t,\cdot)-G_0(t,\cdot)P_{\psi_2}\big) \right\|_{L^2}\lesssim\mathrm{e}^{-ct}\sum\limits_{j=0,1,2}\|\psi_j\|_{H^{-2j}}+\mathrm{e}^{-ct}|P_{\psi_2}|,
\end{align*}
we are able to derive the estimate \eqref{Opt-Prof} from the above easily.

Nevertheless, to derive the optimal lower bound estimate, we would like to estimate the profile $\psi^{(2)}(t,\cdot)$ in the $L^2$ norm from the below carefully because
\begin{align}\label{eq.sum}
\|\psi(t,\cdot)-\psi^{(1)}(t,\cdot)\|_{L^2}\geqslant\underbrace{\|\psi^{(2)}(t,\cdot)\|_{L^2}}_{\mbox{next target}}-\underbrace{\|\psi(t,\cdot)-\psi^{(1)}(t,\cdot)-\psi^{(2)}(t,\cdot)\|_{L^2}}_{=o(\ml{D}_n^{(1)}(t))}.
\end{align}
Reminding the definitions of $P_{\Psi_{1,2}}$ and $P_{2}$, 
we have the expression of $\widehat{\psi}^{(2)}(t,\xi)$ as follows:
\begin{align*}
\widehat{\psi}^{(2)}(t,\xi)=i (\xi \circ M_{\psi_2} ) \widehat{G}_0(t,|\xi|)+ \left( P_{\Psi_{1,2}} +P_{2} t |\xi|^{2} \right) \widehat{G}_1(t,|\xi|).
\end{align*}
This gives 
\begin{align*}
|\widehat{\psi}^{(2)}(t,\xi)|^{2}=(\xi \circ M_{\psi_2} )^{2} |\widehat{G}_0(t,|\xi|)|^{2}+ \left( P_{\Psi_{1,2}} +P_{2} t |\xi|^{2} \right)^{2} |\widehat{G}_1(t,|\xi|)|^{2},
\end{align*}
and integrating the resultant over $\mb{R}^n$,  one deduces
\begin{align*}
\| \widehat{\psi}^{(2)}(t,\xi)\|_{L^{2}}^{2}&=\int_{\mathbb{R}^{n}}
(\xi \circ M_{\psi_2} )^{2} |\widehat{G}_0(t,|\xi|)|^{2} \mathrm{d} \xi+\int_{\mathbb{R}^{n}}
\left( P_{\Psi_{1,2}} +P_{2} t |\xi|^{2} \right)^{2} |\widehat{G}_1(t,|\xi|)|^{2}
\mathrm{d} \xi\\
&=:A_{1}(t) +A_{2}(t).
\end{align*}
We now estimate $A_{1}(t)$ and $A_{2}(t)$ separately. 
At first, for the estimate of $A_{1}(t)$, noting the observations that 
$ |\xi|^2|\widehat{G}_0(t,|\xi|)|^{2}$ is radial symmetric with respect to $\xi$ and 
\begin{align*}
\int_{\mathbb{S}^{n-1}} \omega_{j} \omega_{k} \mathrm{d} \omega =0
\end{align*}
for $1 \leqslant j<k \leqslant n$, 
we arrive at 
\begin{align*}
A_{1}(t) & \gtrsim \int_{0}^{\infty} r^{2} | \widehat{G}_0(t,r)|^{2} r^{n-1} \mathrm{d}r \int_{\mathbb{S}^{n-1}} (\omega \circ M_{\psi_2} )^{2} \mathrm{d} \omega  \\
&\gtrsim \|\chi_{\intt}(\xi)|\xi|\widehat{G}_0(t,|\xi|)\|_{L^2}^2|M_{\psi_2}|^2\\
&\gtrsim\big(\ml{D}_n^{(1)}(t)\big)^{2} |M_{\psi_2}|^{2}, 
\end{align*}
which implies the estimate 
\begin{align}\label{eq:31}
	\| \psi^{(2)}(t,\cdot)\|_{L^{2}}=\| \widehat{\psi}^{(2)}(t,\xi)\|_{L^{2}} \gtrsim \ml{D}_n^{(1)}(t) |M_{\psi_2}|. 
\end{align}

In what follows in the passage, we estimate $A_{2}(t)$.
To do so, we decompose  $A_{2}(t)$ into three parts: 
\begin{align*}
A_{2}(t)=A_{2,1}(t)+A_{2,2}(t)+A_{2,3}(t),
\end{align*}
carrying 
\begin{align*}
A_{2,1}(t) & := P_{\Psi_{1,2}}^{2} \int_{\mathbb{R}^{n}} |\widehat{G}_1(t,|\xi|)|^{2} \mathrm{d} \xi= P_{\Psi_{1,2}}^{2} t^{1-\frac{n}{2}} |\mathbb{S}^{n-1}| \int_{0}^{\infty} \mathrm{e}^{-\delta r^{2}} \sin^{2}(\sqrt{t} r)r^{n-3} \mathrm{d}r,  \\ 
A_{2,2}(t) & := 2P_{\Psi_{1,2}} P_{2} t^{1-\frac{n}{2}} |\mathbb{S}^{n-1}| \int_{0}^{\infty} \mathrm{e}^{-\delta r^{2}} \sin^{2}(\sqrt{t} r)r^{n-1} \mathrm{d}r,\\
A_{2,3}(t) & := P_{2}^2 t^{1-\frac{n}{2}} |\mathbb{S}^{n-1}| \int_{0}^{\infty} \mathrm{e}^{-\delta r^{2}} \sin^{2}(\sqrt{t} r)r^{n+1} \mathrm{d}r,
\end{align*}
where $|\mathbb{S}^{n-1}|$ represents the surface measure of $\mathbb{S}^{n-1}$.

When $n=1,2$, applying the argument in \cite{Ikehata-Onodera=2017}, we see that 
\begin{align*}
A_{2,1}(t) & \geqslant P_{\Psi_{1,2}}^{2} \big(\ml{D}_n^{(1)}(t)\big)^{2}, \\  
|A_{2,2}(t)|+|A_{2,3}(t)| & \lesssim (|P_{\Psi_{1,2}}P_2|+P_2^2) t^{1-\frac{n}{2}}.
\end{align*}
Therefore, we obtain 
\begin{align} \label{eq:32}
A_{2}(t) \geqslant A_{2,1}(t)- |A_{2,2}(t)|- |A_{2,3}(t)|  \gtrsim P_{\Psi_{1,2}}^{2} \big(\ml{D}_n^{(1)}(t)\big)^{2}
\end{align}
for large $t$.
Combining \eqref{eq:31} and \eqref{eq:32}, we arrive at the desired estimate 
\begin{align*}
\|\psi^{(2)}(t,\cdot)\|_{L^2}\gtrsim \left(|M_{\psi_2}|+|P_{\Psi_{1,2}}|\right)\ml{D}_n^{(1)}(t)
\end{align*}
if $n=1,2$ for $t\gg1$.

On the other hand, when $n \geqslant 3$
motivated by \cite{Ikehata=2014} with the facts that $\mathrm{e}^{-\delta r^2}r^{n-3} \in L^{1}(0,\infty)$ and $2\sin^{2}(\sqrt{t} r) =1-\cos(2 \sqrt{t} r)$,
we have
\begin{align}\label{eq:33}
\int_{0}^{\infty} \mathrm{e}^{-\delta r^{2}} \sin^{2}(\sqrt{t} r)r^{n-3} \mathrm{d}r & =\frac{1}{2} \int_{0}^{\infty} \mathrm{e}^{-\delta r^{2}} r^{n-3} \mathrm{d}r-\frac{1}{2} \int_{0}^{\infty} \mathrm{e}^{-\delta r^{2}} \cos(2 \sqrt{t} r)r^{n-3} \mathrm{d}r\notag \\
& = \frac{1}{2} \delta^{-\frac{n}{2}+1} \int_{0}^{\infty} \mathrm{e}^{-\tilde{r}^{2}} \tilde{r}^{n-3} \mathrm{d}\tilde{r} +o(1) \notag\\
& = \frac{1}{4} \delta^{-\frac{n}{2}+1} \Gamma \left(\frac{n}{2}-1 \right)  +o(1) 
\end{align}
as $t \to \infty$, where 
\begin{align*}
\Gamma(z):= \displaystyle\int_{0}^{\infty} \mathrm{e}^{-\eta} \eta^{z-1} \mathrm{d}\eta= 2 \int_{0}^{\infty} \mathrm{e}^{-\eta^{2}} \eta^{2z-1} \mathrm{d}\eta
\end{align*}
for $z>0$. 
In the last lines, we applied the change of integral variable $\delta^{\frac{1}{2}} r = \tilde{r}$ and the Riemann-Lebesgue formula.
Similarly, we also investigate 
\begin{align}
\label{eq:34}
\int_{0}^{\infty} \mathrm{e}^{-\delta r^{2}} \sin^{2}(\sqrt{t} r)r^{n-1} \mathrm{d}r 
& = \frac{1}{4} \delta^{-\frac{n}{2}} \Gamma \left(\frac{n}{2} \right)  +o(1),\\
\int_{0}^{\infty} \mathrm{e}^{-\delta r^{2}} \sin^{2}(\sqrt{t} r)r^{n+1} \mathrm{d}r 
& = \frac{1}{4} \delta^{-\frac{n}{2}-1} \Gamma \left(\frac{n}{2}+1 \right)  +o(1),\label{eq:341}
\end{align}
as $t \to \infty$.
Hence, concerning the large-time situation, we recall the well-known relation that
\begin{align*}
\Gamma \left(\frac{n}{2} \right)= \frac{n}{2}\Gamma \left(\frac{n}{2}-1 \right)
\end{align*}
 to see that 
\begin{align} \label{eq:35}
A_{2}(t)\geqslant \frac{1}{4} \delta^{-\frac{n}{2}-1} \Gamma \left(\frac{n}{2}-1 \right) t^{1-\frac{n}{2}} |\mathbb{S}^{n-1}| \left(\delta P_{\Psi_{1,2}}+\frac{n}{2}P_2\right)^2.
\end{align}
by employing \eqref{eq:33} and \eqref{eq:341}.
Summarizing \eqref{eq:31} with \eqref{eq:35}, we conclude the estimate
\begin{align*}
\|\psi^{(2)}(t,\cdot)\|_{L^2}\gtrsim\left(|M_{\psi_2}|+|2\delta P_{\Psi_{1,2}}+nP_2|\right)t^{\frac{1}{2}-\frac{n}{4}}
\end{align*}
if $n\geqslant 3$ for $t\gg1$.

Eventually, from the lower bound estimate of $\|\psi^{(2)}(t,\cdot)\|_{L^2}$ and \eqref{eq.sum}, the proof of desired estimates \eqref{Opt-Prof1} and \eqref{eq:12} are finished.

\section{Final remarks}\label{Section-5}
	In recent years, semilinear Cauchy problems for acoustic waves with power-type nonlinearity have caught a lot of attentions, for example, semilinear Kuznetsov's equation (see \cite{D'Ambrosio-Lucente,Dabbicco-Reissig=2014,Dao-Reissig=2019} referring to semilinear viscoelastic damped waves) and semilinear Moore-Gibson-Thompson equations (see \cite{Chen-Palmieri=2020,Chen-Ikehata=2021,Ming-Yang-Fan-Yao=2021} and references therein). If one is interested in the semilinear Blackstock's model with power-type nonlinear term, namely,
	\begin{align}\label{Eq-Semilinear-Blackstock}
		\begin{cases}
			(\partial_t-\kappa\Delta)(\psi_{tt}-\Delta\psi-\delta\Delta\psi_t)+\kappa(\gamma-1)(b\nu-\kappa)\Delta^2\psi_t=|\psi|^p,&x\in\mb{R}^n,\ t>0,\\
			\psi(0,x)=\psi_0(x),\ \psi_t(0,x)=\psi_1(x),\ \psi_{tt}(0,x)=\psi_2(x),&x\in\mb{R}^n,
		\end{cases}
	\end{align}
	with $\kappa>0$, $\delta>0$, $\gamma\in(1,\frac{5}{3}]$ and $b\nu>0$, the next results may be obtained by using derived estimate \eqref{Opt-Est} in this work without technical challenging.
	\begin{itemize}
		\item There  uniquely exists a global (in time) energy solution
		\begin{align*}
			\psi\in\ml{C}^2([0,\infty),H^4)\cap \ml{C}^1([0,\infty),H^2)\cap \ml{C}([0,\infty), L^2)
		\end{align*}
	such that
	\begin{align*}
		\sum\limits_{\ell=0,1,2}\left((1+t)^{-\frac{2-\ell}{2}+\frac{n}{4}}\|\partial_t^{\ell}\psi(t,\cdot)\|_{L^2}+(1+t)^{\frac{2-\ell}{2}+\frac{n}{4}}\|\partial_t^{\ell}\psi(t,\cdot)\|_{\dot{H}^{4-2\ell}}\right)\lesssim\sum\limits_{\ell=0,1,2}\|\psi_{\ell}\|_{H^{4-\ell}\cap L^1},
	\end{align*}
		providing that 
		\begin{itemize}
			\item[$\diamond$] $2\leqslant p\leqslant\frac{n}{\max\{n-8,0\}}$ from applications of Gagliardo-Nirenberg inequality
			\begin{align*}
				\|\,|\psi(t,\cdot)|^p\|_{L^{m}}\lesssim\|\psi(t,\cdot)\|_{L^{mp}}^p\lesssim\|\psi(t,\cdot)\|_{L^2}^{(1-\beta_m)p}\|\psi(t,\cdot)\|_{\dot{H}^4}^{\beta_m p}
			\end{align*}
			with $[0,1]\ni\beta_m:=\frac{n}{4}(\frac{1}{2}-\frac{1}{mp})$ for $m=1,2$;
			\item[$\diamond$] $ p>\frac{n+2}{n-2}$ for $n\geqslant 5$ from the integrable condition
			\begin{align*}
				\int_0^{t/2}(1+t-\tau)^{1-\frac{n}{4}}\|\,|\psi(\tau,\cdot)|^p\|_{L^2\cap L^1}\mathrm{d}\tau\lesssim(1+t)^{1-\frac{n}{4}}\int_0^{t/2}(1+\tau)^{(1-\frac{n}{2})p+\frac{n}{2}}\mathrm{d}\tau<\infty,
			\end{align*}
		\end{itemize}
		 by assuming small datum belonging. Its proof is standard basing on the contraction mapping, e.g. the framework used in \cite{Palmieri-Reissig=2018}.
		\item The global (in time) weak solution does not exist if $1<p<\infty$ for $n=1,2$, and $1<p\leqslant\frac{n+1}{n-2}$ for $n\geqslant 3$, by assuming positive datum (in the integral sense) in $L^1$. This result implies optimality for $n=1,2$. Its proof is based on the test function method, e.g. some generalizations of \cite{Zhang=2001}.
	\end{itemize}
	Whereas, due to the gap $p\in(\frac{n+1}{n-2},\frac{n+2}{n-2}]$ with $n\geqslant 3$ for indeterminacy of global existence or blow-up finite time, the critical exponent $p=p_{\mathrm{crit}}(n)$ for the semilinear model \eqref{Eq-Semilinear-Blackstock} for $n\geqslant 3$ is still open ($p_{\mathrm{crit}}(n)=\infty$ for $n=1,2$), where the critical exponent denotes the threshold for global (in time) existence of solution and blow-up of solution.

%\section*{Acknowledgments}

% ------------------------------------------------------------------------
\end{document}